\begin{document}
\providecommand{\keywords}[1]{\textbf{\textit{Keywords: }} #1}

\newtheorem{satz}{Theorem}[section]
\newtheorem{prop}[satz]{Proposition}
\newtheorem{kor}[satz]{Corollary}

\theoremstyle{remark}
\newtheorem{remark}{Remark}

\newcommand{\cc}{{\mathbb{C}}}   
\newcommand{\nn}{{\mathbb{N}}}   
\newcommand{\qq}{{\mathbb{Q}}}  
\newcommand{\rr}{{\mathbb{R}}}

\title[An approach for computing families of covers]{An approach for computing families of multi-branch-point covers and applications for symplectic Galois groups}

\author{Dominik Barth}\email{dominik.barth@mathematik.uni-wuerzburg.de}
\address{Institute of Mathematics, University of W\"urzburg, Emil-Fischer-Stra{\ss}e 30, 97074 W\"urzburg, Germany}

\author{Joachim K\"onig}\email{jkoenig@kaist.ac.kr}
\address{Department of Mathematical Sciences, KAIST, 291 Daehak-ro Yuseong-gu, Daejeon 34141, South Korea}

\author{Andreas Wenz}\email{andreas.wenz@mathematik.uni-wuerzburg.de} 
\address{Institute of Mathematics, University of W\"urzburg, Emil-Fischer-Stra{\ss}e 30, 97074 W\"urzburg, Germany}

\begin{abstract}{We propose an approach for the computation of multi-parameter families of Galois extensions with prescribed ramification type. More precisely, we combine existing deformation and interpolation techniques with recently developed strong tools for the computation of $3$-point covers. To demonstrate the applicability of our method in relatively large degrees, we compute several families of polynomials with symplectic Galois groups, in particular obtaining the first totally real polynomials with Galois group $\text{PSp}_6(2)$.}\end{abstract}
\keywords{Galois theory; explicit computation; Hurwitz spaces; Belyi maps}
\maketitle

{\small
Note: Supplementary data are contained in an extra file, available at:
\begin{center}
 \url{https://arxiv.org/src/1803.08778/anc/anc.txt}
\end{center}
}

\section{Introduction and overview of results}
This article deals with the explicit computation of families of Galois extensions of $\mathbb{Q}(t)$ with prescribed Galois group and ramification type. Techniques for such computations as well as examples of interest have been exhibited in several papers, notably \cite{Malle}, \cite{Couveignes}, \cite{Hal}, \cite{Koe}, \cite{Koe2017}. Methods used include Gr\"obner basis calculations (often referred to as ``direct methods"), complex and $p$-adic deformation methods, interpolation techniques, Riemann--Roch space computations etc. Since direct methods become expensive quite quickly as the degree and number of branch points increase, an idea used extensively was the deformation of covers with small branch point number into covers with larger branch point number, ideally reducing the computation of a cover with many branch points to computation of covers with only three branch points. While this procedure has been applied successfully in several special cases, it also has obvious downsides, notably possible problems with numerical instability as well as its length in case of iterative application. Here, we present a modified approach, reducing the computation of Galois covers with group $G$ and $r$ branch points directly to the computation of $3$-branch point covers. This comes at the cost of increasing the degree of the cover; however, in light of recent rapid improvements regarding the calculation of $3$-point covers, this is a price worth paying. 

This paper is structured as follows:
Section \ref{sec:main_ideas} introduces the key idea of reducing multi-branch point covers to those with three branch points, and on how this helps to compute the entire family of polynomials with prescribed Galois group.
The required theoretical background will be established in Section \ref{RIGP}, including the theory of Hurwitz spaces, as well as computational techniques.  
 In Section \ref{sec:Psp} we demonstrate our methods by computing a two-parameter family of polynomials with symplectic Galois group $\text{PSp}_6(2)$ of degree 28 with infinitely many totally real specializations. 
Further results are collected in the final Section \ref{sec: final}, which contains a two-parameter family of polynomials of degree 27 with $\text{PSp}_4(3)$ as Galois group, a family of degree 36 polynomials with group $\text{PSp}_6(2)$, and an example of a complex approximation of a five-branch-point covering with $\text{PSp}_6(2)$ as monodromy group.

\section{Main ideas}
\label{sec:main_ideas}

\subsection{Reducing to Belyi maps}
\label{sec:belyi}

Recall that a covering $f:X \to \mathbb{P}^1_\mathbb{C}$ is called a {\it Belyi map} if it is unramified outside of $\{0,1,\infty\}$.\footnote{We also allow less than three branch points here, to simplify notation below.}
Belyi's famous theorem asserts that every compact Riemann surface which can be defined over $\overline{\mathbb{Q}}$ admits a Belyi map to $\mathbb{P}^1$. Belyi's proof uses a clever composition of covers, successively reducing the number of branch points. It was first suggested to us by Peter M\"uller to use such an idea to reduce calculation of multi-branch point covers to Belyi maps.
This can be achieved efficiently due to the following result. 
 
\begin{prop}
\label{reduction_to_3pts}
Let $r\ge 3$, and let $C=(C_1,\dots, C_r)$ be a ramification type for the finite group $G$ with non-empty Nielsen class, i.e. there are elements $c_i \in C_i$ for $i=1,\dots, r$ such that $\prod_i c_i =1$ and $G = \langle c_1, \dots, c_r \rangle$.
Then for every Belyi map $g:\mathbb{P}^1_\cc \to \mathbb{P}^1_\cc$ of degree at least $r-2$, there exists a cover $f:X\to\mathbb{P}^1_\cc$ of type $C$ such that $g\circ f:X\to \mathbb{P}^1_\cc $  is a Belyi map. Furthermore, $r-2$ is the minimal degree with this property.
\end{prop}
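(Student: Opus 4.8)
The plan is to translate the condition ``$g\circ f$ is a Belyi map'' into a purely combinatorial condition on the branch locus of $f$, and then to count how large the relevant point set can be.

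First I would analyze the ramification of a composition. For a point $x\in X$ one has $e_x(g\circ f)=e_x(f)\cdot e_{f(x)}(g)$, so $x$ is a ramification point of $g\circ f$ if and only if $x$ ramifies in $f$ (equivalently, $f(x)$ is a branch point of $f$) or $f(x)$ ramifies in $g$ (equivalently, $g(f(x))$ is a branch value of $g$). Since the branch values of $g$ lie in $\{0,1,\infty\}$ by hypothesis, the branch values of $g\circ f$ form exactly the set $\{g(b):b\text{ a branch point of }f\}\cup B_g$, where $B_g\subseteq\{0,1,\infty\}$. Consequently $g\circ f$ is a Belyi map if and only if every branch point of $f$ lies in $g^{-1}(\{0,1,\infty\})$. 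This reduces the whole proposition to controlling the size of the fiber $g^{-1}(\{0,1,\infty\})$.

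Next I would count this fiber. Because $g$ is a Belyi map, all of its ramification sits over $\{0,1,\infty\}$, so Riemann--Hurwitz yields $\sum_x(e_x(g)-1)=2\deg g-2$ with the sum supported on $g^{-1}(\{0,1,\infty\})$. Writing $N=|g^{-1}(\{0,1,\infty\})|$ as $3\deg g$ minus the total ramification gives $N=3\deg g-(2\deg g-2)=\deg g+2$ (equivalently, the Euler-characteristic count vertices $+$ faces $=$ edges $+2$ for the associated dessin). With these two facts the result follows on both sides. For existence, since $\deg g\ge r-2$ we have $N\ge r$, so I may select any $r$ distinct points $p_1,\dots,p_r\in g^{-1}(\{0,1,\infty\})$; as the Nielsen class of $C$ is non-empty, the Riemann Existence Theorem produces a cover $f:X\to\mathbb{P}^1$ of type $C$ with branch points precisely $p_1,\dots,p_r$, and by the criterion above $g\circ f$ is then a Belyi map. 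For minimality, if $\deg g=d<r-2$ then $N=d+2<r$, so the fiber cannot accommodate the $r$ distinct branch points that any type-$C$ cover necessarily possesses, and no admissible $f$ exists; since Belyi maps of every degree do occur (for instance $z\mapsto z^d$), the bound $r-2$ cannot be lowered.

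The step I expect to require the most care is the reduction in the first paragraph. One must justify that the branch points of $f$ may be prescribed freely inside a finite set -- this is exactly where non-emptiness of the Nielsen class and the Riemann Existence Theorem enter -- and one must check that no unwanted branch values of $g\circ f$ are created over points of $g^{-1}(\{0,1,\infty\})$ that happen to be ramification points of $g$ but are \emph{not} chosen as branch points of $f$; such points are harmless precisely because their $g$-images still lie in $\{0,1,\infty\}$. Once this bookkeeping is pinned down, the counting argument is routine.
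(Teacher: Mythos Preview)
Your argument is correct and follows essentially the same route as the paper's proof: both reduce to the criterion that $g\circ f$ is Belyi iff the branch locus of $f$ is contained in $g^{-1}(\{0,1,\infty\})$, compute $|g^{-1}(\{0,1,\infty\})|=\deg g+2$ via Riemann--Hurwitz, and then invoke Riemann's existence theorem. You have simply spelled out the composition-of-ramification bookkeeping and the existence of Belyi maps in each degree more explicitly than the paper does.
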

\begin{proof}
Assume that $g$ is as above with $\deg(g)\ge r-2$. From the Riemann--Hurwitz genus formula, it follows that the set $g^{-1}(\{0,1,\infty\})\subset\mathbb{P}^1$ is of cardinality exactly $\deg(g)+2\ge r$.
Using Riemann's existence theorem, we may pick a cover $f:X\to \mathbb{P}^1$ of type $C$ ramified only inside $g^{-1}(\{0,1,\infty\})$. 
By construction, $g\circ f$ is then unramified outside $\{0,1,\infty\}$.
Conversely, if $\deg(g)<r-2$, then the above shows that any cover of type $C$ has to be ramified at some point outside $g^{-1}(\{0,1,\infty\})$, implying that $g\circ f$ is not a Belyi map. 
\end{proof}

The point of Proposition~\ref{reduction_to_3pts} is that, assuming that we have an efficient algorithm to compute explicit equations for Belyi maps with a prescribed ramification type, we automatically get, as a component of the resulting map $g\circ f$, an explicit equation for {\textit{some}} cover in a prescribed family with more than three branch points. The technique for computation of Belyi maps which we use (see Section \ref{sec:Psp}) has been developed by the first and third author, and has previously been applied to calculate Belyi maps with interesting monodromy groups of degree up to $280$ (see \cite{Barth2016}, \cite{BW_J1}, \cite{BW_J2}, \cite{Barth2017}). Of course, any other method that allows the computation of high degree Belyi maps works as well. Alternative techniques are discussed in \cite{Sijsling2014}, \cite{Roberts2016}, \cite{Vidunas2016}, \cite{Monien2014}, \cite{Klug2014}, \cite{Monien2017}, \cite{Monien2018}.

\subsection{Outline of our algorithm}
\label{outline}
Here, we give a brief description of our algorithmic application of Proposition \ref{reduction_to_3pts}. This algorithm can be divided into the following steps:

\subsubsection*{Finding the monodromy for a suitable Belyi map}
We are given a class $r$-tuple $C=(C_1,\dots,C_r)$ of a group $G$. We choose $g:\mathbb{P}^1\to \mathbb{P}^1$ to be the cyclic cover $g:x\mapsto x^{r-2}$, ramified only at $0$ and $\infty$. We then compute a triple $(\sigma_0,\sigma_1, \sigma_\infty)$ corresponding to a Belyi map $g\circ f$ as in Proposition \ref{reduction_to_3pts}. This can be done rather easily by observing some elementary group-theoretical invariants of such a cover $g\circ f$. In particular,

\begin{itemize}
\item[i)] its Galois group embeds naturally into the imprimitive wreath product $G\wr C_{r-2} = G^{r-2}\rtimes C_{r-2}$, with $C_{r-2}$ permuting the copies of $G$ cyclically. \item[ii)] Its monodromy $(\sigma_0,\sigma_1,\sigma_\infty)$ has the following properties: $\sigma_0^{r-2}$ (resp., $\sigma_\infty^{r-2}$) is an element of $G^{r-2}$ projecting into class $C_1$ (resp., $C_r$) in every copy of $G$; and $\sigma_1$ is an element of $C_2\times ... \times C_{r-1} \subseteq G^{r-2}$.
\end{itemize}

Indeed, the embedding of the Galois group in i) is an elementary fact in Galois theory, see, e.g., Section 14.2 in \cite{Cox}; 
in the same way, ii) follows directly from our choice of branch points and inertia group generators in Proposition \ref{reduction_to_3pts}.

Permutation triples fulfilling the above conditions are then found via computer search.

\subsubsection*{A complex approximation of a single $r$-point cover, via the Belyi map}
Using the permutation triple $(\sigma_0,\sigma_1, \sigma_\infty)$ from the first step, we calculate a complex approximation of an equation $F(u,X)=0$ (with $F\in \mathbb{C}[u,X]$) for the underlying Belyi map $g\circ f$. Using the definition of $g$, we can set $u=t^{r-2}$ and factor $F$ over $\mathbb{C}[t]$, which gives us an approximate equation for the ($r$-branch point) subcover $f:X\to\mathbb{P}^1$ as described in Proposition \ref{reduction_to_3pts}. Especially for this step, we refer to Section \ref{sec:computation} for details.

\subsubsection*{Turning a single cover into a family}
Next, we input the approximation for $f$, and use Newton's method to deform the cover $f$ by moving the branch points in small steps in $\mathbb{P}^1$, thus obtaining approximate equations for covers in our family with any desired branch point locus. This has been done, e.g., in \cite{Koe}, with explicit Magma code available in Chapter 11. It is vital for numerical stability of this method that the branch points should not be too close to each other; this makes the cyclic cover $g$ above a good choice, since $f$ then has branch points $0$, $\infty$ and $(r-2)$-th roots of unity. Next, it is important to note that the coefficients of the equations thus obtained are specializations of certain algebraic functions, parameterized by an algebraic variety called the {\textit{Hurwitz space}} of our prescribed ramification type.\footnote{For this to be true, we implicitly assume the existence of a {\textit{universal family}}; e.g., the assumption $Z(G)=\{1\}$ is sufficient for this, cf.\ the following section.} This means that any sufficiently large number of these coefficients (viewed as functions) will satisfy an algebraic dependency. Finding these dependencies is possible e.g.\ via interpolation between sufficiently many different covers, and will eventually yield defining equations for the Hurwitz space itself.

\subsubsection*{Covers defined over $\mathbb{Q}$}
Finally, after such dependencies are obtained (and, ideally, improved in order to get nice equations) for all occurring coefficients we may search for rational solutions, corresponding to rational points on the Hurwitz space, i.e. (via Theorem \ref{hurwitz_points}) to covers of the prescribed ramification type defined over the rationals.

We explain the above steps in some more detail with a concrete example in Section \ref{sec:computation}. We also refer to \cite{Koe2017} for more on these and related techniques.

\subsection{Comparison with previous approaches}
There have been many previous papers on the computation of families of covers, notably by Malle (\cite{Malle}), Couveignes (\cite{Couveignes}) and the second author (\cite{Koe}, \cite{Koe2017}).
In these, either the permutation degree of the group in question is sufficiently low to allow finding an equation for at least one cover of the prescribed ramification type ``directly" (e.g., via a Gr\"obner basis approach, or via brute-force search modulo a small prime, followed by $p$-adic lifting), or the starting point of the calculation is an $(r-1)$-branch point cover, with monodromy $(\sigma_1\sigma_2, \sigma_3,...,\sigma_r)$, which is then deformed into an $r$-point cover with monodromy $(\sigma_1,...,\sigma_r)$ --- possibly iteratively, to eventually get from a $3$-point cover to an $r$-point one.
Unless the permutation degree and the number of branch points are ``small", these methods have obvious downsides. Firstly, direct methods like the Gr\"obner basis approach become very expensive as the number of variables (which is roughly the permutation degree times $r-2$, where $r$ is the number of branch points) grows.\footnote{While it is of course not possible to give a precise bound, computations with $3$-branch points are generally considered feasible for a degree into the $20$s, and correspondingly lower degree for larger branch point number.} 
Also, for $r\ge 5$, the iterated deformation process to obtain larger number of branch points is quite time-consuming.
Finally, the complex deformation techniques turn out to be numerically rather delicate in many cases. Especially where there is no {\textit{transitive}} tuple $(\sigma_1\sigma_2,\sigma_3,...,\sigma_r)$ available, experiments showed numerically unstable behaviour in many examples. The main improvement in the use of Proposition \ref{reduction_to_3pts} --- which can be considered a ``vertical" approach (going from three to many branch points by composing covers on top of each other), compared to the ``horizontal" one of deformation and moving branch  points in $\mathbb{P}^1$ --- is to circumvent the lengthy process of deformation and get directly into the prescribed family of $r$-point covers, after which the remaining calculations are rather smooth. The obvious price is that the degree of the initial Belyi map is increased by a factor of $r-2$. However, due to the far-developed methods in computation of Belyi maps, this is (often) worth the effort.

\section{Theoretical background on Hurwitz spaces and universal families}
\label{RIGP}

Formally speaking, the objects parameterized by the equations obtained in the following sections are Hurwitz spaces of straight Nielsen classes of Galois covers, as well as their associated universal families. This interpretation will not be necessary to verify the correctness of the theorems which we eventually obtain (about Galois groups of certain polynomials); it will, however, be very useful to understand the reasoning behind the individual computational steps leading to these polynomials.
We therefore recall some basic facts about Hurwitz spaces of Galois covers. For a deeper introduction, cf.\ \cite{FV}, \cite{RW} or \cite{Voe}.

\subsubsection*{Moduli spaces of Galois covers and their components}
Let $G$ be a finite group and $r\in\nn$ be such that $G$ can be generated by $r-1$ elements. Then by Riemann's existence theorem the set of all Galois covers of $\mathbb{P}^1_\cc$ with Galois group $G$ and with exactly $r$ branch points is non-empty. The set of equivalence classes of these covers is denoted by $\mathcal{H}^{in}_r(G)$ (we refer to \cite[Section 1.2]{FV} for the precise definition of the equivalence relation).
 
The set $\mathcal{H}^{in}_r(G)$ carries a natural topological structure, and also the structure of an algebraic variety. More precisely, if $\mathcal{U}_r$ denotes the space of $r$-sets in $\mathbb{P}^1$, then the {\it branch point reference map} $\Psi: \mathcal{H}^{in}(G)\to \mathcal{U}_r$ is a dominant morphism.

If $C = (C_1,...,C_r)$ is an $r$-tuple of conjugacy classes of $G$, then the set of $[f]\in \mathcal{H}^{in}_r(G)$ whose monodromy group generators 
lie in $C_1,\dots,C_r$ up to suitable permutation of the branch points is a union of connected components of $\mathcal{H}^{in}_r(G)$, and is called the (inner) {\it Hurwitz space} of $C$, denoted $\mathcal{H}^{in}(C)$.

\begin{remark}
\label{rem:reference_map}
If the class tuple $C$ contains certain conjugacy classes more than once, then the restriction of the branch point reference map $\Psi$ to the Hurwitz space $\mathcal{H}^{in}(C)$ factors through a map $\Psi': \mathcal{H}^{in}(C) \to \tilde{U}_r$, where $\tilde{U}_r$ is a space of {\it partially ordered} $r$-sets.\footnote{Just to give an illustrative example which will re-appear in the computational sections: if $C=(C_1,C_1,C_2,C_3)$ with pairwise distinct classes $C_1$, $C_2$ and $C_3$, then $\tilde{U}_r$ is the set of all $(\{\lambda_1, \lambda_2\}, \lambda_3,\lambda_4)$, where $\{\lambda_1,\lambda_2, \lambda_3,\lambda_4\}$ is a $4$-set in $\mathbb{P}^1$.} This detail is useful for computations in view of the following paragraphs. 
\end{remark}

\subsubsection*{Group-theoretical description of Hurwitz spaces via Nielsen classes}

Elements in a given fiber under the branch point reference map $\Psi$ can be described in a purely group-theoretical way, as follows.

Let $G$ be a finite group, $r\ge 2$ and $$\mathcal{E}_r(G):=\{(\sigma_1,...,\sigma_r) \in (G\setminus\{1\})^r \mid \sigma_1 \cdot ... \cdot \sigma_r = 1, \langle \sigma_1,...,\sigma_r\rangle=G\}$$
the set of all generating $r$-tuples in $G\setminus\{1\}$ with product 1. Furthermore let $\mathcal{E}_r^{in}(G)$ be the quotient of $\mathcal{E}_r(G)$ modulo conjugating the tuples simultaneously
with elements of $G$. For any $r$-tuple $C:=(C_1,...,C_r)$ of non-trivial conjugacy classes of $G$, the {\it Nielsen class} $Ni(C)$ is defined as the set of all $(\sigma_1,...,\sigma_r)\in \mathcal{E}_r(G)$ such that for
some permutation $\pi \in S_r$ it holds that $\sigma_i \in C_{\pi(i)}$ for all $i \in \{1,...,r\}$.
The {\it straight Nielsen class} $SNi(C)$ is defined as
$\{(\sigma_1,...,\sigma_r) \in \mathcal{E}_r(G) \mid \sigma_i \in C_i \text{\ for } i=1,...,r\}$.
As above, one may also define $Ni^{in}(C)$ and $SNi^{in}(C)$.

The relevance of Nielsen classes lies in the following:
\begin{prop}
\label{prop:nielsen}
\begin{itemize}
\item[a)] Elements of $\mathcal{H}^{in}_r(G)$ with a given fiber under the branch point reference map $\Psi$ (i.e., with a given branch point set) are in a natural $1$-to-$1$ correspondence with elements of $\mathcal{E}_r^{in}(G)$.
Every Nielsen class $Ni^{in}(C)$ is a union of orbits under the action of the (Hurwitz) braid group (cf.\ \cite[Chapter III.1.1 and III.1.2]{MM} for a definition of the braid group and its action), and each orbit corresponds to a connected component of the Hurwitz space $\mathcal{H}^{in}(C)$.
\item[b)] Elements of $\mathcal{H}^{in}(C)$ in a  given fiber of the map $\Psi'$ from Remark \ref{rem:reference_map} are in $1$-to-$1$ correspondence with elements of $SNi^{in}(C)$.
\end{itemize}
\end{prop}

\subsubsection*{Rational points on Hurwitz spaces and universal families}
The following classical theorem directly links the inverse Galois problem with the existence of rational points on moduli spaces (cf.\ \cite[Cor.\ 10.25]{Voe} and \cite[Th.\ 4.3]{DF2}):
\begin{satz}
\label{hurwitz_points}
Let $G$ be a finite group with $Z(G)=1$. There is a universal family of ramified coverings $\mathcal{F}:\mathcal{T}_r(G) \to \mathcal{H}_r^{in}(G)\times \mathbb{P}^1$,
such that for each $h\in \mathcal{H}_r^{in}(G)$, the fiber cover $\mathcal{F}^{-1}(h) \to \mathbb{P}^1$ is a ramified Galois cover with group $G$.
This cover is defined regularly over a field $K \subseteq \cc$ if and only if $h$ is a $K$-rational point. In particular, the group $G$ occurs regularly as a Galois group over $\mathbb{Q}$
if and only if $\mathcal{H}_r^{in}(G)$ has a rational point for some $r$.
\end{satz}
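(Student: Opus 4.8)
The statement packages three claims --- existence of a universal family over the inner Hurwitz space, the equivalence between a field of definition of a fiber cover and a field of rationality of the corresponding moduli point, and the consequent criterion for the regular inverse Galois problem --- and the plan is to establish them in that order, building $\mathcal{H}_r^{in}(G)$ first as a topological space and then as an arithmetic variety.

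First I would construct the covering $\Psi\colon\mathcal{H}_r^{in}(G)\to\mathcal{U}_r$ topologically. Riemann's existence theorem identifies a $G$-Galois cover of $\mathbb{P}^1$ branched over a fixed $r$-set $\mathbf{t}$ with its monodromy datum, a tuple $(\sigma_1,\dots,\sigma_r)$ generating $G$ with $\prod_i\sigma_i=1$, taken up to uniform conjugation by $G$; these inner Nielsen classes constitute the fiber $\Psi^{-1}(\mathbf{t})$. Letting $\mathbf{t}$ vary over the configuration space $\mathcal{U}_r$ and tracking how monodromy transforms under loops of branch points shows that $\Psi$ is a finite unramified covering, with $\pi_1(\mathcal{U}_r)$ (the Hurwitz braid group) acting on Nielsen classes and the connected components of $\mathcal{H}_r^{in}(G)$ corresponding to braid orbits.

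Next I would pass to the algebraic and arithmetic setting and extract the universal family. Since $\mathcal{U}_r$ is an affine $\qq$-variety and $\Psi$ is finite \'etale over $\cc$, Riemann existence together with GAGA upgrades $\mathcal{H}_r^{in}(G)$ to an algebraic variety; its descent to $\qq$ is governed by Fried's branch-cycle argument, which renders the $\mathrm{Gal}(\overline{\qq}/\qq)$-action on geometric components compatible with the cyclotomic action on Nielsen classes. Here the hypothesis $Z(G)=1$ enters decisively: it forces each $G$-cover to have no nontrivial $G$-equivariant automorphism, so that $\mathcal{H}_r^{in}(G)$ is a fine rather than merely coarse moduli space and therefore carries a universal family $\mathcal{F}\colon\mathcal{T}_r(G)\to\mathcal{H}_r^{in}(G)\times\mathbb{P}^1$ whose fiber over $h$ is precisely the cover parametrized by $h$.

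With the family in hand the remaining assertions are comparatively formal. Restricting a $\qq$-family to a $K$-rational point yields a cover defined regularly over $K$, giving one implication; the converse --- that regular definability over $K$ pins the moduli point to $\mathcal{H}_r^{in}(G)(K)$ --- follows once more from $Z(G)=1$, which guarantees that the field of moduli of each cover is itself a field of definition. The inverse Galois criterion is then a translation: a regular realization of $G$ over $\qq$ is the same datum as a $G$-cover of $\mathbb{P}^1$ defined regularly over $\qq$, hence by the equivalence exactly a $\qq$-rational point of $\mathcal{H}_r^{in}(G)$ for some $r$. The genuine difficulty throughout is arithmetic rather than topological: descending the space and its universal family to $\qq$, and proving that field of moduli equals field of definition. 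Both hinge on the centerless hypothesis, and the efficient route is to invoke the rigidity and descent formalism of the cited works of Fried--V\"olklein and D\`ebes--Fried rather than to reconstruct it.
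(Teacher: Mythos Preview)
The paper does not give its own proof of this theorem; it is stated with a citation (``cf.\ \cite[Cor.\ 10.25]{Voe} and \cite[Th.\ 4.3]{DF2}'') and no proof environment follows. Your outline is a correct sketch of precisely the construction carried out in those references --- topological Hurwitz space via Riemann's existence theorem and braid action, algebraization and descent to $\mathbb{Q}$, and the use of $Z(G)=1$ to guarantee a fine moduli space and ``field of moduli $=$ field of definition'' --- and you yourself close by deferring to Fried--V\"olklein and D\`ebes--Fried, so your proposal is entirely aligned with, and in fact more informative than, the paper's treatment.
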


The existence of rational points as in Theorem \ref{hurwitz_points} may sometimes be deduced from the existence of rational {\it curves} on Hurwitz spaces. Particularly in the case $r=4$, restriction to a curve on $\mathcal{H}^{in}(C)$ is generally without loss. E.g., one may restrict, via $\text{PGL}_2(\cc)$-equivalence, to the set of all covers with monodromy in $C$ and ordered branch point set $(0,1,\infty,\lambda)$, for some $\lambda\in \cc\setminus\{0,1\}$. Of course, whether this $\text{PGL}_2(\cc)$-action preserves rational points on $\mathcal{H}^{in}(C)$ depends on the precise structure of $C$; therefore one may consider covers with partially symmetrized branch point sets as well --- cf.\ Chapter III.7 in \cite{MM}, and also Section \ref{sec:Psp} below.

In short, Theorem \ref{hurwitz_points} links the existence of Galois covers defined over $K$ to the existence of $K$-points on certain curves. We also refer to these as {\textit{Hurwitz curves}}.
There are well known theoretical criteria to determine the genus of these Hurwitz curves (cf.\ e.g.\ Thm. III.7.8 in \cite{MM}, as well as our application in Proposition~\ref{prop:psp62}), essentially using the monodromy of the map $\Psi'$ from Remark \ref{rem:reference_map}. In some cases, this suffices to guarantee the existence of rational points.
 
Finally, restriction of the universal family in Theorem \ref{hurwitz_points} (in the case $Z(G)=1$) yields a universal family $\mathcal{F}:\mathcal{T} \to \mathcal{H}^{in}(C) \times \mathbb{P}^1$ of covers in the Nielsen class $Ni^{in}(C)$ (cf.\ Theorem 4.3 in \cite{DF2}). Defining polynomials for this family, whose coefficients lie in the function field of the respective Hurwitz space, are usually the main object of computations; compare the following sections.

\section{Example: Totally real \texorpdfstring{$\text{PSp}_6(2)$}{PSp(6,2)}-realizations}
\label{sec:Psp}
\subsection{Theoretical results}
A classical variant of the inverse Galois problem is the question whether, for a given finite group $G$, there exists a Galois extension $F\mid\mathbb{Q}$ with $F\subset \mathbb{R}$ such that $\text{Gal}(F\mid\mathbb{Q})\cong G$.
It is known that if every finite group is a Galois group, then also every finite group is a Galois group of such a totally real extension.

Computation of ``multi-branch-point" covers is particularly important for the computation of totally real Galois extensions.
This is due to the fact that, with very few exceptions, a $\mathbb{Q}$-regular Galois extension of $\mathbb{Q}(t)$ with totally real specializations must have at least $4$ branch points, see Example 10.2 in \cite[Chapter I]{MM}.

In the following, $\text{PSp}_6(2)$ denotes the projective symplectic group on the vector space $(\mathbb{F}_2)^6$.
First, we deduce the existence of totally real $\text{PSp}_6(2)$-realizations theoretically from known criteria.
For this, we view $PSp_6(2)<S_{28}$ in its transitive action on $28$ points. We define $C_1$ to be the (unique) conjugacy class of involutions of cycle structure $(2^6.1^{16})$ in $G$, $C_2$ to be the class of involutions of 
cycle structure $(2^{12}.1^4)$ and length $3780$, and $C_3$ to be the class of elements of order $7$ (and cycle structure $(7^4)$). By $\mathcal{H}$, we denote the Hurwitz space corresponding to the class tuple $(C_1,C_2,C_2,C_3)$, and by $\mathcal{C}$ the curve on $\mathcal{H}$ corresponding to the branch point loci $(0, 1+\sqrt{\lambda}, 1-\sqrt{\lambda}, \infty)$ with $\lambda\in \mathbb{C}\setminus\{0,1\}$.\footnote{The `symmetrization" of the second and third branch point reflects the fact that $\mathcal{H}$ has a morphism to a space $\mathcal{U}_4$ of partially ordered $4$-sets, see Remark \ref{rem:reference_map}. The branch point loci corresponding to our curve $\mathcal{C}$ are $k(\lambda)$-rational points in this space.}  

\begin{prop}
\label{prop:psp62}
The Hurwitz curve $\mathcal{C}$ is a rational curve over $\mathbb{Q}$. In particular, there exist infinitely many $\qq$-regular Galois extensions $E\mid\mathbb{Q}(t)$ of ramification type $(C_1,C_2,C_2,C_3)$. Furthermore, some of these extensions possess totally real specializations, and the degree-$28$ subfield of $E\mid\mathbb{Q}(t)$ corresponding to a point stabilizer is a rational function field.
\end{prop}
\begin{proof}
Computation e.g.\ with Magma yields that $\mathcal{C}$ is of genus $0$; more precisely, Theorem 7.8a) in Chapter III of \cite{MM} yields that the degree-$70$ cover $\mathcal{C}\to \mathbb{P}^1$
induced by the branch point reference map is ramified at three points, with inertia group generators of cycle structures $(15.12^2.9.8.7^2)$, $(3^{13}.2^{14}.1^3)$ and $(2^{35})$. Since there is, e.g., a unique cycle of length 15 in the first
permutation, \cite[Proposition III.7.9]{MM} yields that $\mathcal{C}$ is a rational genus-$0$ curve over 
$\mathbb{Q}$.\footnote{Note that all conjugacy classes $C_1$, $C_2$, $C_3$ are rational conjugacy classes of $G$, which ensures that $\mathcal{C}$ is defined over $\qq$.}

Furthermore, there are tuples $(\sigma_1, \sigma_{2,1}, \sigma_{2,2}, \sigma_3)$ in the above Nielsen class fulfilling $\sigma_3 = (\sigma_3^{-1})^{\sigma_{2,2}}$ and $\sigma_1 = (\sigma_1^{-1})^{\sigma_{2,1}}$.\footnote{In fact, 
the second equality is automatic from the first due to the product-$1$ condition.}
Theorem 10.3 in Chapter I of \cite{MM} implies that there are $G$-covers $X\to \mathbb{P}^1$ of ramification type $(C_1,C_2,C_2,C_3)$, defined over $\rr$, such that all four branch points are real and the complex conjugation in the segment
between the two branch points of class $C_2$ is induced by the identity element of $G$. Since the rational points of (the rational genus-$0$ curve) $\mathcal{C}$ are dense in the set of real points, there are also infinitely many
$G$-covers with the above property which correspond to rational points on $\mathcal{C}$, i.e., which are defined over $\mathbb{Q}$. Each of those yields a $\mathbb{Q}$-regular Galois extension $E\mid\mathbb{Q}(t)$ of ramification type $(C_1,C_2,C_2,C_3)$
such that any specialization $t_0\in \mathbb{Q}$ in the segment between the two $C_2$-branch points yields a totally real extension. Of course, Hilbert's irreducibility theorem ensures that many of these specializations preserve the Galois group $G$.

The last assertion follows from the fact that the tuples in our Nielsen class are of genus 0 and that the normalizer in $G$ of a cyclic subgroup generated by an element of $C_3$ fixes one of the $7$-cycles. This last claim
implies that in a $\mathbb{Q}$-regular $G$-extension $E\mid\mathbb{Q}(t)$ as above, the degree-$28$ subfield corresponding to a point stabilizer has a place of degree $1$ extending the $C_3$-branch point. This ensures that this field is a rational genus-$0$
function field.
\end{proof}

\subsection{Steps of the computation}
\label{sec:computation}
We now turn the theoretical result of Proposition \ref{prop:psp62} into an explicit polynomial, using the approach outlined in Section \ref{sec:main_ideas}. 

\subsubsection*{The Belyi map}
As explained before, we start by computing an approximate equation for a decomposable genus zero Belyi map $X\to \mathbb{P}^1$ such that the following holds: for a degree-$2$ subcover $Y$ of $X\to \mathbb{P}^1$, the (genus zero) cover $X\to Y$ has ramification type $(C_1,C_2,C_2,C_3)$. Then $X\to \mathbb{P}^1$ is of degree $56$ with Galois group (of the Galois closure) embedding into $\text{PSp}_6(2)\wr C_2$, and
if $(x,y,(xy)^{-1})\in S_{56}^3$ is a triple describing the ramification of this Belyi map, the elements $x$, $y$, and $(xy)^{-1}$ are of cycle structure $(14^4)$, $(2^{24},1^8)$, and  $(4^6,2^{16})$, respectively.

A computer computation shows that there are 35 triples (up to simultaneous conjugation) in $\text{PSp}_6(2)\wr C_2$ satisfying property ii) from Section \ref{outline}. Among these possible triples the permutations $(x,y,(xy)^{-1})$ can be chosen arbitrarily. We pick

\begin{align*}
x\,=\;&(1, 55, 27, 36, 8, 54, 26, 32, 4, 50, 22, 30, 2, 29)\\
&(3, 34, 6, 35, 7, 56, 28, 42, 14, 52, 24, 40, 12, 31)\\
&(5, 47, 19, 45, 17, 37, 9, 43, 15, 49, 21, 44, 16, 33)\\
&(10, 51, 23, 39, 11, 46, 18, 53, 25, 48, 20, 41, 13, 38),
\end{align*}
and
\begin{align*}
 y \,=\;& (1, 20)(3, 6)(4, 21)(5, 15)(8, 25)(9, 19)(10, 18)\\
          &(11, 24)(12, 23)(13, 22)(14, 28)(16, 26)(30, 41)\\
          &(31, 52)(32, 44)(33, 49)(35, 56)(36, 48)(37, 45)\\
          &(38, 53)(39, 40)(43, 47)(46, 51)(50, 54).
\end{align*}

We now explain in greater detail on how to obtain a genus-0 Belyi map $F: \mathbb{P}^1 \to \mathbb{P}^1 $ with ramification structure $(x,y,(xy)^{-1})$ ramified over $0$, $1$ and $\infty$. Our main goal consists of finding $c_0\in \mathbb{C}$ and monic complex polynomials $p_{4}$, $q_{8}$, $q_{24}$, $r_6$, $r_{16}$ of respective degree denoted in the index such that
\[
F = \frac{c_0 \cdot  p_4^{14}}{q_8 \cdot  q_{24}^2} = 1 + \frac{(c_0 - 1) \cdot r_{16}^2 \cdot r_6^4}{ q_8 \cdot  q_{24}^2}.
\]
In particular, $c_0 \cdot p_4^{14} -  q_8 \cdot  q_{24}^2 - (c_0 - 1) \cdot r_{16}^2 \cdot r_6^4 = 0$. By comparing coefficients, we see that the coefficients of the unknown polynomials have to solve a system of polynomial equations consisting of 59 unknowns and 56 equations. Note that this difference corresponds to the fact that Belyi maps are uniquely determined up to inner M\"obius transformations. Therefore, we may assume $p_4 = X(X^2-1)(X-\ell)$ for some $\ell \in \mathbb{C}$ in order to reduce the number of unknowns by three. Then, an approximative solution for this particular system can be computed using Newton's method.\footnote{This system also admits solutions that do not correspond to $\text{PSp}_6(2)$, see for example  \cite{Sijsling2014} for more details.} A sufficiently good initial value for this approach can be obtained by constructing the dessin of $F$, i.e. the set $F^{-1}([0,1])$. This can be achieved in the following way:

Let $a:= \text{ord}(x)$, $b:= \text{ord}(y)$, $c:= \text{ord}((xy)^{-1}),$ and
$$
\Delta:=\left< \delta_a,\delta_b,\delta_c \,|\, \delta_a^a= \delta_b^b=\delta_c^c = \delta_a\delta_b\delta_c = 1 \right>.
$$
Note that $(x,y,(xy)^{-1})$ is hyperbolic since $1/a+1/b+1/c < 1$.
We now consider the embedding $\Delta \hookrightarrow \text{PSL}_2(\mathbb{R})$ described in \cite[Proposition 2.5]{Klug2014}, where $\delta_a$ (resp. $\delta_b$) is mapped to a hyperbolic rotation around $i$ (resp. $\mu i$ for some $\mu >1$) of angle $\pi/a$ (resp. $\pi/b$).
Thus $\Delta$ acts on the upper half-plane $\mathbb{H}:= \{z\in \mathbb{C}: \text{Im}(z)>0\}$ via the natural action of $\text{PSL}_2(\mathbb{R})$ on $\mathbb{H}$, that is
$$
\text{PSL}_2(\mathbb{R}) \to \text{Aut}(\mathbb{H}): \begin{pmatrix}
\alpha & \beta \\ \gamma & \delta
\end{pmatrix} \mapsto
\left( z \mapsto \frac{\alpha z + \beta }{\gamma z + \delta} \right),
$$ 
and its fundamental domain can be chosen to be the hyperbolic kite with vertices $i,h,\mu i,-\bar{h}$ for some $h \in \mathbb{H}$. Furthermore, let $\varphi$ denote the homomorphism from $\Delta$ onto $G:=\langle x,y \rangle$ such that $\delta_a \mapsto x$ and $\delta_b\mapsto y$.
With the notation $\Gamma := \varphi^{-1}(\text{Stab}_G(1))<\Delta$ we define
$$\Phi:\mathbb{H}/\Gamma \to \mathbb{H}/\Delta, \; z \text{ mod } \Gamma \mapsto z \text{ mod } \Delta. \footnote{For a topological space $X$ and a group $G$ acting on $X$ let $X/G$ denote the corresponding orbit space.}$$
This is a three-point branched cover of degree $56$  
 with monodromy group isomorphic to $G$, see for example \cite{Klug2014} for more details. Because $\mathbb{H}/\Delta$ is homeomorphic to $\mathbb{P}^1$ we may assume that the ramification locus of $\Phi$ is given by $\{0,1,\infty\}$, i.e.,$\Phi$ is a Belyi map. We now study the dessin of $\Phi$, i.e. the set $\Phi^{-1}([0,1])$ which is visualized in Figure \ref{1}. 

\begin{figure}
\begin{center}
\includegraphics[clip, trim=0.5cm 7cm 0.5cm 6cm, width=.75\textwidth]{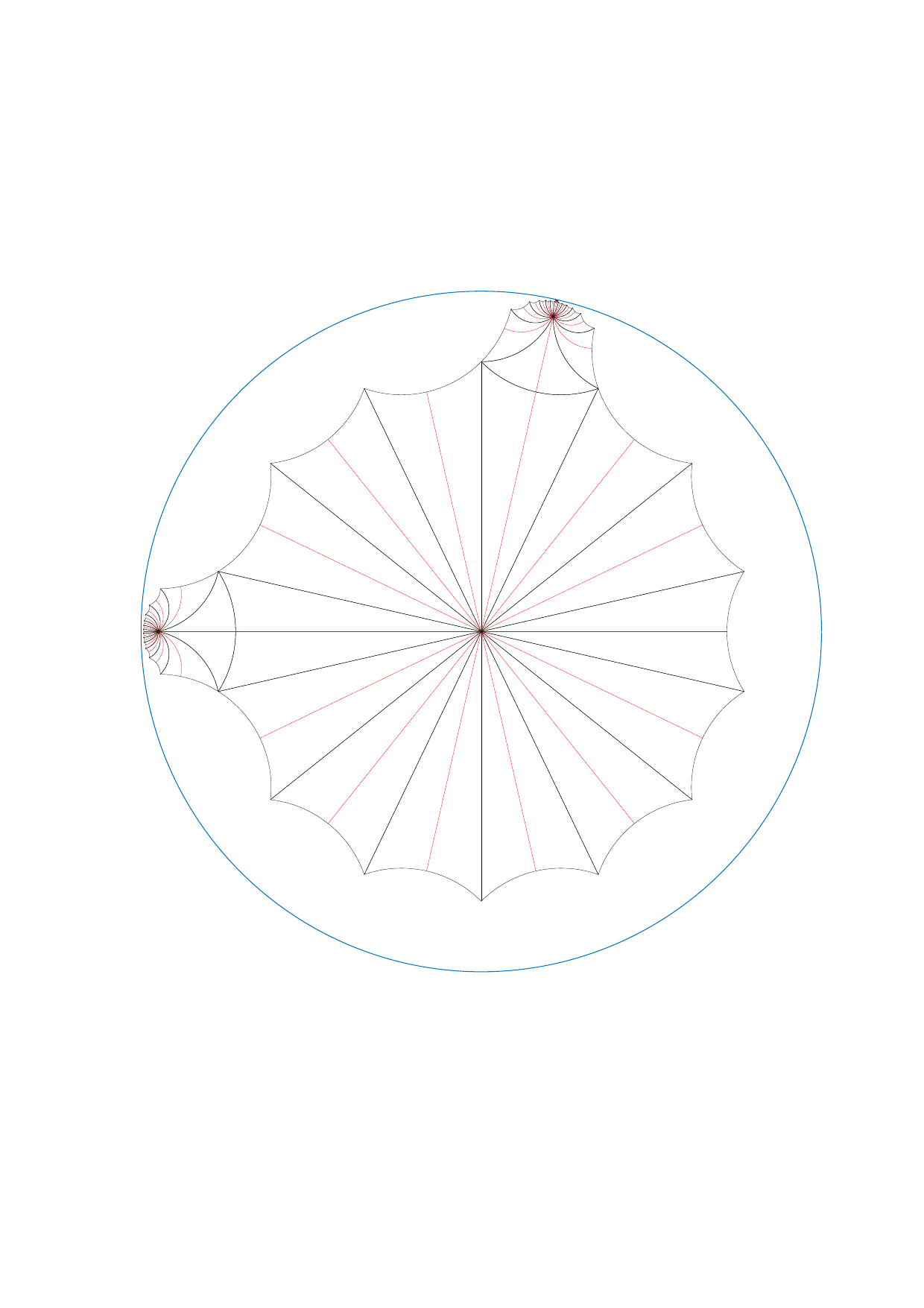}
\caption{
fundamental domain and dessin of $\Phi$ transformed to the unit disk
}
\label{1}
\end{center}
\end{figure}

Pick a connected fundamental domain $D$ of $\mathbb{H}/\Gamma$, see Figure \ref{1}. This is possible, because $G= \left< x,y \right>$ is transitive. Since $D$ is irregularly shaped, we conformally map it to $\mathbb{H}$. For example, this can be achieved by using the Schwarz--Christoffel Toolbox \cite{Driscoll1996} for MATLAB \cite{MATLAB}, see Figure \ref{2}.

\begin{figure}
\begin{center}
\includegraphics[clip, trim=0.5cm 13cm 0.5cm 11cm, width=.65\textwidth]{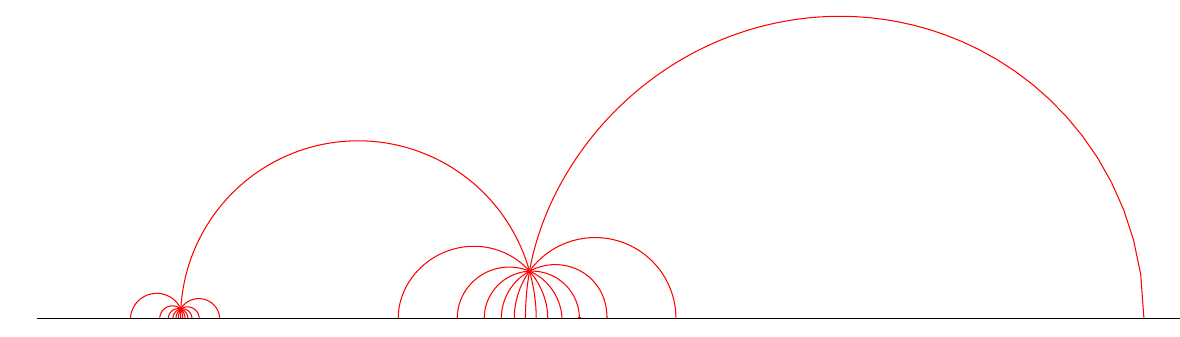}
\caption{dessin of $\Phi$ conformally mapped to $\mathbb{H}$}
\label{2}
\end{center}
\end{figure} 

Now $\partial \mathbb{H} = \mathbb{R} \cup \{\infty\}$ inherits the structure of $D$ induced by the quotient $\mathbb{H}/\Gamma$.
In order to glue corresponding (adjacent) real line segments we apply slit maps of type (Figure \ref{4})
$$
\text{slit}_{A}: \mathbb{H} \to \mathbb{H}: z \mapsto  (z-A)^A(z+1-A)^{A-1}
$$
where $0<A<1$ is a real number, see \cite{Marshall2007} and \cite{Barnes2014} for a thorough analysis. This step requires the permutation triple $(x,y,(xy)^{-1})$ to be of genus zero, otherwise corresponding line segments will not appear to be adjacent to each other at some point.

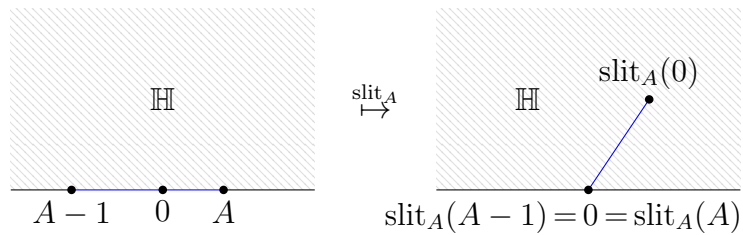
\begin{figure}
\begin{center}
\hspace{5mm}
\begin{tikzpicture}[scale = 0.8]
\usetikzlibrary{patterns}

\draw[draw = none, pattern=north west lines, pattern color=lightgray, opacity = 0.7] (-2.5,0) rectangle (2.5,3);

\coordinate[label = below:$A-1$](1-A) at (-1.5,0);
\coordinate[label = below:$0$](0) at (0,0);
\coordinate[label = below:$A$](A) at (1,0);

\draw (-2.5,0) -- (1-A);
\draw (A) -- (2.5,0);

\draw[blue] (A) -- (1-A);

\fill (A) circle (2pt);
\fill (0) circle (2pt);
\fill (1-A) circle (2pt);

\coordinate[label = center:$\mathbb{H}$](H) at (0,1.5);

\coordinate[label = center:$\stackrel{\text{slit}_A}{\mapsto}$](mapsto) at (3.5,1.5);

%%%%%

\draw[draw = none, pattern=north west lines, pattern color=lightgray, opacity = 0.7] (4.5,0) rectangle (9.5,3);
\coordinate[label = below:$\text{slit}_A(A-1) \,\text{=}\, 0  \,\text{=}\,\text{slit}_A(A)\phantom{--}$](E) at (7,0);

\coordinate[label = above:$\text{slit}_A(0)$](F) at (8,1.5);
\coordinate[label = center:$\mathbb{H}$](H) at (6,1.5);

\draw (4.5,0) -- (9.5,0);

\draw[blue] (7,0) -- (8,1.5);

\fill (E) circle (2pt);
\fill (F) circle (2pt);

\end{tikzpicture}
\caption{conformal map $\text{slit}_{A}$ and its behaviour on $\partial \mathbb{H}$}
\label{4}
\end{center}
\end{figure} 
 
We end up with an approximate dessin in $\mathbb{P}^{1}$ with the same ramification structure as $\Phi$, see Figure \ref{3}. With respect to the coordinates and multiplicities of the zeroes, ones and poles of the constructed approximate dessin we pick $c_0\in \mathbb{C}$ and complex polynomials $p_{4}$, $q_{8}, q_{24}$, $r_6,r_{16}$ accordingly. This gives a starting point for a successful application of Newton's method, allowing us to find an approximation of the desired Belyi map $F$ of sufficiently high precision. Note that our approach does not require to recognize the coefficients of $F$ as algebraic numbers.

\begin{figure}
\begin{center}
\includegraphics[clip, trim=0cm 7cm 0cm 7cm, width=.6\textwidth]{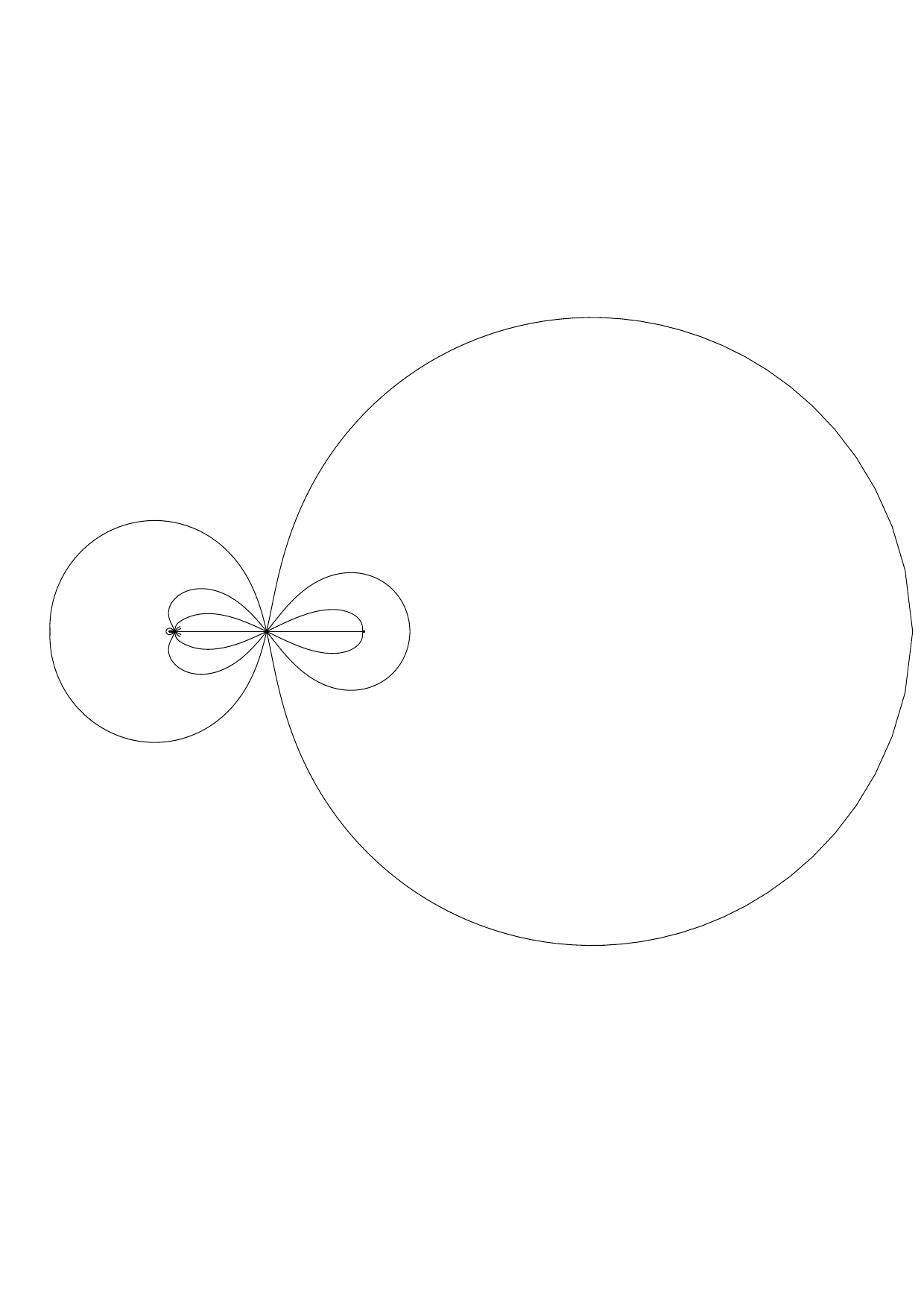}
\caption{resulting approximate dessin in $\mathbb{P}^1$}
\label{3}
\end{center}
\end{figure} 

\subsubsection*{Towards the universal family}
In the following, we describe how to obtain an equation for the entire universal family  starting from a single member. This part of the computation is relatively ``routine", see e.g.\ previous occurrences in \cite{Koe2017}.

We define $\mathcal{T}_\mathcal{C}\to \mathcal{C}\times \mathbb{P}^1$ to be the restriction of the universal family to the Hurwitz curve $\mathcal{C}$. We choose a parameter $t$ for $\mathbb{P}^1$, and a parameter $\alpha$ for $\mathcal{C}$ (which by Proposition \ref{prop:psp62} is also birational to $\mathbb{P}^1$). Recall that every $h\in \mathcal{C}$ yields a fiber cover $\mathcal{T}_\mathcal{C}(h)\to \mathbb{P}^1$ with group $G$, and due to the last assertion of Proposition \ref{prop:psp62}, this is the Galois closure of a (degree-$28$) genus-$0$ cover $\mathbb{P}^1\to \mathbb{P}^1$.

Up to M\"obius transformation, the computed Belyi map $F$ of degree $56$ is of the form $f_0(X)^2$ with a rational function $f_0$ of degree $28$, see section A in the extra file; this function has four branch points, group $\text{PSp}_6(2)$ and ramification type $(C_1,C_2,C_2,C_3)$. 
Up to rescaling, the branch points of $f_0$ are $0$, $1$, $-1$ and $\infty$.
We use this function $f_0$ as a starting point to compute the Hurwitz curve $\mathcal{C}$ of all covers with ramification type $(C_1,C_2,C_2,C_3)$ 
and branch points $0, 1+\sqrt{\lambda}$, $1-\sqrt{\lambda}$, $\infty$, with $\lambda\in \qq\setminus\{0,1\}$, by assembling equations for many fiber covers $\mathcal{T}_\mathcal{C}(h)\to \mathbb{P}^1$ in our universal family.\\
 This can be done by complex deformation techniques, slowly increasing the parameter $\lambda$ and using Newton approximation to adjust the coefficients of $f_\lambda$. The point is that this is numerically stable, since the starting point is already a cover with $4$ sufficiently separated branch points, leading to a sufficiently ``far from singular" matrix in Newton's method.\footnote{Of course, this observation should be understood as a qualitative statement, not something that can be quantified in generality.} 
The function field of the Hurwitz curve is a rational field $\mathbb{Q}(\alpha) $ by Proposition \ref{prop:psp62}, containing $\qq(\lambda)$ (where $\lambda$ is viewed as a transcendental) via the branch point reference map. Thus, the universal family $\mathcal{T}_\mathcal{C}\to \mathcal{C}\times \mathbb{P}^1$ can be parameterized by $f(t,X):=f_1(X)^2f_2(X)-tf_3(X)^7$, where $\deg(f_1)=6$, $\deg(f_2)=16$, $\deg(f_3)=3$, 
 and where each coefficient of the $f_i$ is an element of $\mathbb{Q}(\alpha)$ --- hence fulfilling some (yet unknown) algebraic dependency with $\lambda$. Analogously, the ramification indices at $1\pm \sqrt{\lambda}$ yield conditions on the factorization of $f(1\pm \sqrt{\lambda},X)$. Since a generator of the (rational!) root field of $f(t,X)$ is only unique up to $\text{PGL}_2$-action, we may apply linear transformations in $X$ and, e.g., fix the coefficient at $X^2$ in $f_3$ to be $0$ and the one at $X^5$ in $f_1$ to be $1$.\\
 In fact, two ``random" coefficients can usually be expected to generate the entire function field $\mathbb{Q}(\alpha)$.  We use this and let the coefficient $\beta$ at $X^4$ in $f_1$ converge to a rational value, using Newton approximation. 
 Then any further coefficient $\gamma$ will converge to an algebraic number of degree at most $[\qq(\beta,\gamma):\qq(\beta)]$ (which is bounded by the index of $\qq(\beta)$ in the function field of the Hurwitz curve), and given a sufficient complex precision, we can recognize this algebraic number using the LLL algorithm. 
 Doing this for several rational values of $\beta$, we obtain (by interpolation) the algebraic dependency between $\beta$ and $\gamma$ parameterizing the function field of our Hurwitz curve.

It is now easy to find the yet unknown parameter $\alpha$ of $\qq(\beta,\gamma)=\qq(\alpha)$ using a Riemann--Roch space computation.

Finally, we use Newton approximation again to let $\alpha$ converge to several rational values. Then all the coefficients of $f_1$, $f_2$, $f_3$ must also be rational values. These can easily be recognized from their complex approximation,
 and interpolating between these several values once again yields dependencies between $\alpha$ and each coefficient, i.e.\ expressions of each coefficient as a rational function in $\alpha$.

We then obtain a polynomial $f=f(\alpha,t,X)\in \qq[\alpha,t,X]$ whose Galois group over $\qq(\alpha,t)$ is isomorphic to $\text{PSp}_6(2)$. Since the coefficients of $f$ are too large to present it here,
we refer to section B in the extra file.
\begin{satz}\label{totally_real}
The polynomial $f(\alpha,t,X)=p(\alpha,X)-tq(\alpha,X)\in \mathbb{Q}(\alpha,t)[X]$, where $p$ and $q$ are given in the extra file (section B), 
has Galois group $\text{\emph{PSp}}_6(2) \leq S_{28}$ over $\qq(\alpha, t)$ and possesses infinitely many totally real specializations. The ramification with respect to $t$ is of type $(2^6.1^{16}$, $2^{12}.1^4$, $2^{12}.1^4$, $7^4)$.
\end{satz}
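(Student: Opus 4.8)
The plan is to read all three assertions off a direct analysis of the explicit map $X \mapsto p(\alpha,X)/q(\alpha,X)$, treating $\alpha$ as a transcendental so that $f = p(\alpha,X) - t\,q(\alpha,X)$ presents a degree-$28$ cover of the $t$-line $\mathbb{P}^1$ over $\overline{\qq(\alpha)}$. I would verify the ramification and the geometric monodromy over $\overline{\qq(\alpha)}(t)$ first, and only afterwards address $\qq$-regularity and the real specializations.

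First I would determine the ramification with respect to $t$. Forming the $X$-discriminant of $p(\alpha,X) - t\,q(\alpha,X)$ yields a polynomial in $t$ over $\qq(\alpha)$ whose vanishing locus is the branch set; at each branch point I would factor the fibre (equivalently, read off the multiplicities from the discriminant's factorisation or a Newton-polygon computation) to extract the cycle type of the corresponding inertia generator. The target is to confirm the four types $(2^6.1^{16})$, $(2^{12}.1^4)$, $(2^{12}.1^4)$, $(7^4)$, i.e.\ the classes $(C_1,C_2,C_2,C_3)$ of Proposition~\ref{prop:psp62}. Connectedness of the fibre (hence irreducibility of $f$ over $\qq(\alpha,t)$, and transitivity of $G_{\mathrm{geom}} \le S_{28}$) comes out of the same computation.

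Next I would identify the group. All three cycle types above are even permutations, so $G_{\mathrm{geom}} \le A_{28}$; combined with transitivity and the fact that the cover was built inside the Hurwitz space of $C$, whose straight Nielsen class $SNi^{in}(C)$ is a single braid orbit generating $\text{PSp}_6(2)$ (Proposition~\ref{prop:psp62}), this should force $G_{\mathrm{geom}} \cong \text{PSp}_6(2)$. To certify this for the explicit polynomial, independently of the numerics used to produce it, I would exclude the only transitive overgroup $A_{28}$, either by a resolvent separating $\text{PSp}_6(2)$ from $A_{28}$ (necessarily of degree beyond pairs, since both act $2$-transitively) or by computing, via a certified algorithm, the Galois group of a non-degenerate rational specialization $f(\alpha_0,t_0,X) \in \qq[X]$ and checking that it returns $\text{PSp}_6(2)$; such a specialization also supplies the lower bound $\text{PSp}_6(2) \le G_{\mathrm{arith}}$. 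Finally, because $C_1,C_2,C_3$ are rational conjugacy classes and, by Proposition~\ref{prop:psp62}, the Hurwitz curve carries a $\qq$-rational point, Theorem~\ref{hurwitz_points} yields $\qq$-regularity, so the arithmetic group over $\qq(\alpha,t)$ coincides with $G_{\mathrm{geom}} \cong \text{PSp}_6(2)$. I expect this certification --- ruling out $A_{28}$ and carrying out a provably correct Galois computation in degree $28$ for coefficients as large as those of $f$ --- to be the main obstacle.

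The totally real specializations follow by transporting the real-topological criterion from the proof of Proposition~\ref{prop:psp62}. Choosing $\alpha_0 \in \qq$ so that the associated cover is one of the real covers for which complex conjugation in the segment between the two $C_2$-branch points is induced by the identity of $G$, every rational $t_0$ in that segment gives a totally real splitting field of $f(\alpha_0,t_0,X)$. Since the Hurwitz curve is a rational genus-$0$ curve over $\qq$ whose rational points are dense in its real locus, infinitely many admissible $\alpha_0$ occur, each contributing infinitely many admissible $t_0$; Hilbert irreducibility then guarantees that infinitely many of the resulting totally real fields retain the full Galois group $\text{PSp}_6(2)$.
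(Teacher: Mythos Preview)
Your outline has the right skeleton --- discriminant for the ramification, narrow the group down, then handle real specializations --- but there is a genuine gap in how you close the arithmetic side. You invoke Theorem~\ref{hurwitz_points} and Proposition~\ref{prop:psp62} to conclude $\qq$-regularity (hence $G_{\mathrm{arith}}=G_{\mathrm{geom}}$) and to identify which $\alpha_0$ make complex conjugation trivial on a real segment. But those are \emph{existence} statements about the universal family; they do not verify that the explicit polynomial $f$ in the file actually parametrizes that family. That identification was obtained numerically and is precisely what the proof must certify, so the appeals to Hurwitz theory at these two steps are circular. (A related smaller gap: the parity of the inertia generators only gives $G_{\mathrm{geom}}\le A_{28}$; it does not by itself rule out $G_{\mathrm{arith}}=S_{28}$.)

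The paper avoids Hurwitz theory entirely in the verification and works only with the explicit $f$. It specializes $\alpha\mapsto 2$ (Malle's lemma says this preserves the Galois group since no branch points collide), then reduces modulo $31$. Over $\mathbb{F}_{31}(t)$ it establishes $2$-transitivity by checking that $\frac{1}{X-t}\,\bar f_2\bigl(\bar p_2(t)/\bar q_2(t),X\bigr)$ is irreducible, which via the classification of $2$-transitive groups leaves only $\{\text{PSp}_6(2),A_{28},S_{28}\}$; a square discriminant kills $S_{28}$, and --- just as you propose --- a resolvent, here the degree-$63$ one coming from the index-$63$ subgroup of $\text{PSp}_6(2)$, kills $A_{28}$. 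Regularity then follows not from Hurwitz theory but from simplicity of $\text{PSp}_6(2)$: the nontrivial geometric group is normal in the arithmetic one, hence equal to it; Beckmann's theorem transports the conclusion back from $\mathbb{F}_{31}(t)$ to $\qq(t)$. For the totally real claim the paper again argues directly: specialize $\alpha\mapsto 0$ and count that $f(0,t_0,X)$ has $28$ real roots for a single $t_0$ between $0$ and the negative branch point; by continuity this holds on the whole segment, and Hilbert finishes.
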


\begin{remark}
The Galois group of $f$ can be checked by applying the \texttt{GaloisGroup} command in Magma (after a suitable specialization and mod $p$ reduction). Although this command does not return proven results we take the output and verify its correctness rigorously in the following proof. Alternative standard techniques (such as computing the monodromy via path lifting)  only provide numerical evidence.
\end{remark}

\begin{proof}
Let $f_2,p_2,q_2\in \mathbb{Q}(t)[X]$ denote the specializations of $f,p,q$ at the place $\alpha \mapsto 2$, and $\bar{f}_2,\bar{p}_2,\bar{q}_2$ their images in $\mathbb{F}_{31}(t)[X]$ under the canonical projection.

By computing the discriminant $\Delta$ of $f$ we see that $f$ and $f_2$ have exactly four branch points with respect to $t$. Furthermore the branch cycle structure of $f$ can be derived by inspecting the inseparability behaviour of $f$ evaluated at the places $t \mapsto 0$, $t \mapsto \infty$, and $t \mapsto r_i$ for $i=1,2$ where $r_1$ and $r_2$ denote the non-zero roots of $\Delta \in \mathbb{Q}(\alpha)[t]$.

By a result of Malle (see \cite[Lemma 3.1]{Malle}), the two groups $\text{Gal}(f\mid \mathbb{Q}(\alpha,t))$ and $\text{Gal}(f_2\mid \mathbb{Q}(t))$ coincide. It remains to show that $\text{Gal}(f_2\mid \mathbb{Q}(t))$ is isomorphic to $\text{PSp}_6(2)$.

Since $\frac{1}{X-t}\cdot \bar{f}_2\left(\frac{\bar{p}_2(t)}{\bar{q}_2(t)},X\right)\in \mathbb{F}_{31}(t)[X]$ is irreducible the Galois group of $\bar{f}_2$ over $\mathbb{F}_{31}(t)$ must be 2-transitive of permutation degree 28, implying $\text{Gal}(\bar{f}_2\mid \mathbb{F}_{31}(t)) \in \{\text{PSp}_6(2),A_{28},S_{28}\}$ due to the classification of finite 2-transitive groups. Dedekind reduction yields $\text{Gal}(f_2\mid \mathbb{Q}(t)) \in \{\text{PSp}_6(2),A_{28}, S_{28}\}$.
Because both discriminants of $f_2$ and $\bar{f}_2$ are squares, $\text{Gal}(f_2\mid \mathbb{Q}(t))$ and $\text{Gal}(\bar{f}_2\mid \mathbb{F}_{31}(t))$ are not $S_{28}$. In particular, $\text{Gal}(f_2\mid \mathbb{Q}(t))$ is simple, and the corresponding function field extension must be regular, allowing us to apply a theorem of Beckmann, see \cite[Chapter I, Proposition 10.9]{MM}, to obtain $\text{Gal}(f_2\mid \mathbb{Q}(t)) \cong \text{Gal}(\bar{f_2}\mid \mathbb{F}_{31}(t))$.

Let $r(t,X)\in \mathbb{F}_{31}(t)[X]$ be the irreducible polynomial of degree 63 in the ancillary file (see section C), then $r\left( \frac{\bar{p}_2(t)}{\bar{q}_2(t)},X\right)$ becomes reducible over $\mathbb{F}_{31}(t)$.\footnote{The polynomial $r$ was obtained by using the Magma command \texttt{GaloisSubgroup} for the index 63 subgroup of $\text{PSp}(6,2)$.
} This guarantees the existence of an index $d\neq 1$ subgroup of $\text{Gal}(\bar{f_1}\mid \mathbb{F}_{31}(t))$ where $d$ is a divisor of $63$. Since $A_{28}$ does not contain such a subgroup we end up with 
$\text{Gal}(\bar{f_2}\mid \mathbb{F}_{31}(t)) = \text{PSp}_6(2)$, thus $\text{Gal}(f_2\mid \mathbb{Q}(t))  =  \text{PSp}_6(2)$.

Finally, we specialize $\alpha\mapsto 0$ (which does not decrease the number of branch points) and verify that for some specialization of $t$ in the interval $[-2.8\cdot 10^{12}, 0]$ (the left bound being approximately the only negative branch point of $f(0,t,X)$), the number of real roots of $f(0,t,X)$ is equal to $28$. The same then follows for {\textit{all}} specializations $t\mapsto t_0$ in that interval; indeed, it is elementary that the number of real preimages of $t\in \mathbb{R}$ under a rational function $r(X)\in \mathbb{R}[X]$ can only change (as a function in $t$) at a critical value of $r(X)$, i.e., at a branch point of the corresponding function field extension.
\end{proof}

 In particular, specializing  $\alpha\mapsto 0$ and applying some linear transformations to decrease the coefficients, we obtain the following:
\begin{kor}
Let {\footnotesize{$\tilde{f}(t,X):=\\
(X^6 - 33/2X^5 - 42924X^4 - 1525664X^3 + 477587712X^2 + 40478785536X +
        863547424768)^2\cdot
			\\ (X^{16} + 271X^{15} - 430719/4X^{14} - 35366300X^{13} + 3314214496X^{12} +
        1797598385556X^{11} + \\ 28249865746816X^{10} - 42517539693978944X^9 -
        3546884171151604080X^8 +\\ 388165289642365195520X^7 +
        67637298931930365811712X^6 + 1157375979002203859189760X^5 -
        370365044650038661036441600X^4 - 30197279842907494819422011392X^3 - \\
        814830488568960744917173272576X^2 + 162666689511335341711909978112X +
        \\256038325580946715804749139017728)
				-t(X^3 - 21952X - 1229312)^7\in \mathbb{Q}(t,X)$.}}\\
Then every specialization of $t$ in the interval $[-4.9\cdot 10^{13}, 0]$ which preserves the Galois group yields a totally real $\text{\emph{PSp}}_6(2)$-polynomial.
\end{kor}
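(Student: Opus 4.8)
The plan is to deduce the corollary directly from Theorem~\ref{totally_real}, rather than redo any of its work from scratch. By construction, $\tilde f(t,X)$ arises from the $\alpha\mapsto 0$ specialization $f(0,t,X)$ of Theorem~\ref{totally_real} by a linear change of the variable $X$ (to shrink the coefficients) together with a rescaling of the parameter $t$; the latter is what moves the negative branch point from $\approx -2.8\cdot 10^{12}$ to $\approx -4.9\cdot 10^{13}$, since a substitution in $X$ alone leaves the branch locus in $t$ untouched. Both operations are $\qq$-automorphisms of the relevant function fields: $X\mapsto aX+b$ with $a,b\in\qq$, $a\neq 0$, is an automorphism of $\qq(t)(X)$ fixing $\qq(t)$, and $t\mapsto ct$ with $c\in\qq^\times$ is an automorphism of $\qq(t)$. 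Hence neither changes the Galois group of the splitting field over $\qq(t)$, which by Theorem~\ref{totally_real} is $\text{PSp}_6(2)$. Therefore every rational specialization $t\mapsto t_0$ preserving the Galois group yields a $\text{PSp}_6(2)$-extension of $\qq$, and such $t_0$ exist in abundance by Hilbert's irreducibility theorem, exactly as in the proof of Theorem~\ref{totally_real}.

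The genuine content of the corollary is total reality over the stated interval, and the key observation is that the number of real roots of $\tilde f(t_0,X)$, viewed as a polynomial in $X$ over $\rr$, is \emph{locally constant} in $t_0\in\rr$ away from the branch points. Indeed, the term $-t(X^3-\dots)^7$ has $X$-degree $21$, strictly below the degree $28$ coming from the product of the degree-$6$ and degree-$16$ factors, so the leading coefficient of $\tilde f$ in $X$ is a nonzero constant; thus $\tilde f(t_0,X)$ has degree exactly $28$ for every finite $t_0$ and its roots vary continuously with $t_0$. A real root can merge into a complex-conjugate pair (or split out of one) only by colliding with another root, i.e.\ at a value of $t_0$ where the $X$-discriminant vanishes. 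Consequently the real-root count is constant on each open interval of $\rr$ free of branch points.

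It therefore suffices to (i) locate the real branch points, namely the real roots of $\mathrm{disc}_X(\tilde f)\in\qq[t]$ together with $t=0$ and $t=\infty$, and check that the interior of $[-4.9\cdot 10^{13},0]$ contains none of them; and (ii) evaluate $\tilde f(t_0,X)$ at a single interior point $t_0$ and confirm that all $28$ roots are real. Either these two steps are carried out directly on $\tilde f$, or — more economically — they are transported from the verification already performed in Theorem~\ref{totally_real}: the real substitution in $X$ maps real roots bijectively to real roots, and the real rescaling in $t$ carries the verified interval $[-2.8\cdot 10^{12},0]$ onto $[-4.9\cdot 10^{13},0]$ and branch points to branch points. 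Combining the conclusion that every interior $t_0$ yields $28$ real roots with the Galois-group statement of the first paragraph proves the corollary.

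The main obstacle is computational rather than conceptual: one must certify, in the presence of the very large integer coefficients displayed, that $\mathrm{disc}_X(\tilde f)$ has exactly one negative real root (near $-4.9\cdot 10^{13}$) and no further branch point inside the interval, and that the sampled fiber genuinely splits into $28$ real roots. This is a finite but heavy exact or interval-arithmetic check, and the delicate point to get right is the branch-point-free-ness of the interior, since it is precisely this that upgrades the single sampled reality check to the claim for all admissible $t_0$.
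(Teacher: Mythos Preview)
Your proposal is correct and matches the paper's own approach: the corollary is stated there with no separate proof, only the remark that it follows by specializing $\alpha\mapsto 0$ in Theorem~\ref{totally_real} and applying linear transformations to shrink the coefficients. Your explicit justification---that the $\qq$-linear changes in $X$ and the rescaling of $t$ preserve the Galois group, map real roots to real roots, and carry the verified interval $[-2.8\cdot 10^{12},0]$ to $[-4.9\cdot 10^{13},0]$---simply spells out what the paper leaves implicit.
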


\section{Further examples}
\label{sec: final}
\subsection{A two-parameter family of polynomials with Galois group \texorpdfstring{\boldmath{$\text{PSp}_4(3)$}}{PSp(4,3)}}
As a further application of our algorithm, we calculate families of polynomials $f(\alpha,t,X)$ with Galois groups $\text{PSp}_4(3)$, resp.\ $\text{PSp}_4(3).2$.
The group $G:=\text{PSp}_4(3).2$ happens to possess a {\textit{rigid}} genus-$0$ four-tuple of rational conjugacy classes. More precisely, if $G$ is viewed in its 
transitive permutation action on $27$ points, the tuple $(C_1,C_1,C_2,C_3)$ is rigid, where $C_1$ denotes the class of involutions of cycle structure 
$(2^6.1^{15})$, $C_2$ denotes the class of cycle structure $(4^6.1^3)$, and $C_3$ denotes the class of length 720 whose elements have cycle structure $(6^4.3^1)$.
The rigidity criterion (see, e.g., \cite[Theorem I.4.8]{MM}) then yields that for every $4$-set of rational points $p_1,...,p_4$, there exists a 
$\qq$-regular Galois extension of $\mathbb{Q}(t)$ with ramification type $(C_1,C_1,C_2,C_3)$ and branch points $p_1,...,p_4$.
Below, we turn this theoretical result into an explicit (and particularly nice!) parametric family of polynomials with group $\text{PSp}_4(3).2$. This family and the corresponding Belyi map of degree $54$ is given in section D in the extra file.
\begin{satz}
The polynomial $f(\alpha,t,X):=(2X^6 - 10\alpha X^4 + 10\alpha X^3 - 10\alpha^2X^2 + 2\alpha^2X + 2\alpha^3 - \alpha^2)^4(4X^3 - 4\alpha X + \alpha) - t(3X^4 - 6\alpha X^2 + 3 \alpha X - \alpha^2)^6\in\qq(\alpha,t)[X]$ 
has regular Galois group $\text{\emph{PSp}}_4(3).2\leq S_{27}$ over $\qq(\alpha,t)$, and branch cycle structure $(2^6.1^{15}$, $2^6.1^{15}$, $4^6.1^3$, $6^4.3^1)$ with respect to $t$.
Furthermore, the polynomial $f(3\alpha^2, t(s), X)\in\qq(\alpha,t)[X]$ where $t(s):=\frac{((-8\alpha/3)^3 + (-8\alpha/3)^2)(3s^2 - (\alpha + 3/8)/(\alpha - 3/8))}{3s^2+1}$ has Galois group
$\text{\emph{PSp}}_4(3)$ over $\qq(\alpha,s)$.
\end{satz}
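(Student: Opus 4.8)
The plan is to follow the template of the proof of Theorem~\ref{totally_real}, while exploiting the rigidity of $(C_1,C_1,C_2,C_3)$ in $G:=\text{PSp}_4(3).2$ recorded above, which makes the group-theoretic part cleaner than in the $\text{PSp}_6(2)$ case. For the first assertion I would separate (a) pinning down the four branch points and their inertia cycle types directly from the explicit equation, from (b) identifying the monodromy group as $G$.

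For (a), write $f=p(\alpha,X)-t\,q(\alpha,X)$ with $\deg_X p=27$ and $\deg_X q=24$, so that $t=p/q$ presents the associated degree-$27$ cover $\mathbb{P}^1_X\to\mathbb{P}^1_t$, automatically of genus $0$. The fibre over $t=0$ is the zero locus of $p=(\text{sextic})^4(\text{cubic})$, giving six points of multiplicity $4$ and three simple points, i.e.\ cycle type $(4^6.1^3)=C_2$; the fibre over $t=\infty$ is governed by $q=(\text{quartic})^6$ together with the degree gap $\deg_X p-\deg_X q=3$ contributed by $X=\infty$, giving cycle type $(6^4.3^1)=C_3$. I would then compute the discriminant $\Delta\in\qq(\alpha)[t]$, divide out the contributions of $0$ and $\infty$, check that exactly two further branch points $r_1,r_2$ remain, and read off the cycle type $(2^6.1^{15})=C_1$ at each $r_i$ from the multiplicity pattern of $f(\alpha,r_i,X)$, exactly as in the inseparability analysis of the proof of Theorem~\ref{totally_real}. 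A Riemann--Hurwitz check confirms consistency with genus $0$, since $18+22+6+6=52=2\cdot 27-2$.

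For (b), I would specialize $\alpha$ to a convenient rational value (keeping four branch points), reduce modulo a suitable prime, and prove transitivity and $2$-transitivity of the reduced group via irreducibility of $\bar f$ and of an appropriate resolvent. The classification of $2$-transitive groups of degree $27$, together with the fact that all inertia generators are even (so $\Delta$ is a square and $S_{27}$ is excluded) and a subgroup resolvent ruling out $A_{27}$ and the affine groups, isolates $G$; a one-time group computation showing that $C_1$ lies in the outer coset $G\setminus\text{PSp}_4(3)$ (while $C_2,C_3\subset\text{PSp}_4(3)$) confirms that the two $C_1$-generators force the full group $G$ rather than the simple subgroup. Malle's lemma \cite[Lemma 3.1]{Malle}, Dedekind reduction and Beckmann's theorem \cite[Chapter I, Proposition 10.9]{MM} then transport the identification back to the generic fibre over $\qq(\alpha,t)$; alternatively, the rigidity of $(C_1,C_1,C_2,C_3)$ already singles out the unique $\qq$-regular $G$-cover with the ramification type verified in (a).

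For the final assertion I would analyze the index-$2$ subgroup $\text{PSp}_4(3)\triangleleft G$, whose fixed field is a quadratic extension $\qq(\alpha,t)(\sqrt{\delta})$. By the branch cycle description and the computation that $C_1$ is the outer class, this quadratic subfield is ramified exactly at $r_1,r_2$, so $\delta$ is, up to squares, $(t-r_1)(t-r_2)$; equivalently it defines a conic over $\qq(\alpha)$. The task is to show this subfield is rational: the substitution $\alpha\mapsto 3\alpha^2$ is designed precisely to rationalize $r_1,r_2$ (equivalently, to furnish a rational point on the conic), after which $\qq(\alpha,t)(\sqrt{\delta})=\qq(\alpha,s)$ is parameterized by $s$, yielding the stated degree-$2$ substitution $t=t(s)$ with critical values $t(0),t(\infty)$ equal to $r_1,r_2$. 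The Galois correspondence then gives $\text{Gal}\big(f(3\alpha^2,t(s),X)\mid\qq(\alpha,s)\big)=\text{PSp}_4(3)$, with regularity inherited from the rational base change. The main obstacle I anticipate is this explicit rationality step: correctly identifying $\delta$, verifying that the conic acquires a rational point after $\alpha\mapsto 3\alpha^2$, and matching the resulting parameterization with the precise closed form of $t(s)$, including the constant $(-8\alpha/3)^3+(-8\alpha/3)^2$ and the shift $(\alpha+3/8)/(\alpha-3/8)$.
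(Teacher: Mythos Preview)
Your treatment of the ramification data in part~(a) and of the quadratic subfield in the final paragraph matches the paper's argument essentially line for line. The genuine gap is in part~(b): the degree-$27$ action of $\text{PSp}_4(3).2$ is \emph{not} $2$-transitive. It is a rank-$3$ action with suborbit lengths $1,10,16$ (this is the classical action on the $27$ lines of a cubic surface, each line meeting $10$ others and skew to $16$). Hence your proposed resolvent test for $2$-transitivity will fail, and the classification of $2$-transitive groups of degree $27$ --- which contains only $A_{27}$, $S_{27}$ and affine groups over $\mathbb{F}_3$ --- never reaches $\text{PSp}_4(3).2$ at all. The strategy modeled on the proof of Theorem~\ref{totally_real} therefore breaks down precisely at the point where you invoke $2$-transitivity.

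The paper's proof replaces this step by computing, directly over $\qq(\alpha)$, the two-variable resolvent $f_1(X)f_2(Y)-f_2(X)f_1(Y)$ and observing that it factors into $X$-degrees $1$, $10$, $16$. This exhibits the suborbit structure of a point stabilizer without any specialization or mod-$p$ reduction. From $1+10+16$ one first deduces primitivity (no union of suborbits has size a proper divisor of $27$), and then checks among the primitive groups of degree $27$ that only $\text{PSp}_4(3)$ and $\text{PSp}_4(3).2$ have this suborbit pattern. The cycle type $(2^6.1^{15})$ seen at one of the finite branch points, which does not occur in $\text{PSp}_4(3)$, then forces the full group $\text{PSp}_4(3).2$. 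This both repairs the gap and avoids the detour through Malle's lemma, Dedekind reduction and Beckmann's theorem.
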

\begin{proof}

Define $f_1,f_2\in \qq(\alpha)[X]$ such that $f=f_1-tf_2$.
The branch cycle structure can be easily computed by observing the inseparability behaviour of $f_1$, $f_2$ and of the specialized polynomials $f(\alpha,t_0,X)$, where $t_0\in \mathbb{Q}\setminus\{0\}$ is a root of the discriminant
$\Delta(\alpha,t)\in \qq(\alpha)[t]$ of $f$.
Next, computer calculation shows that $f_1(X)f_2(Y)-f_2(X)f_1(Y)$ is reducible in $\qq(\alpha)[X,Y]$, with three factors of $X$-degree $1$, $10$ and $16$.
This means that $f$ factors into degree $1$, $10$ and $16$ over $\qq(\alpha)(y)$, where $y$ is a root of $f$. In other words, the point stabilizer in ${\textrm{Gal}}(f\mid\qq(\alpha,t))$ has three orbits on the roots of sizes $1$, $10$ and $16$. This implies ${\textrm{Gal}}(f\mid\qq(\alpha,t))$ is a primitive group. Indeed, if it were imprimitive, then the stabilizer of a point would also fix a block (of size a non-trivial divisor of $27$), and hence a suitable union of orbits of this point stabilizer would have
to be of cardinality a non-trivial divisor of $27$. One now verifies that $\text{PSp}_4(3)$ and $\text{PSp}_4(3).2$ are the only primitive groups of degree $27$ with stabilizers having orbits of size $1$, $10$ and $16$. However, the inseparability behaviour
at one of the finite, non-zero branch points shows that ${\textrm{Gal}}(f\mid\qq(\alpha,t))$ contains an element of cycle structure $(2^6.1^{15})$, which $\text{PSp}_4(3)$ does not. This shows ${\textrm{Gal}}(f\mid\qq(\alpha,t))\cong \text{PSp}_4(3).2$.

Next, note that only the conjugacy class $C_1$ lies outside the index-$2$ normal subgroup $\text{PSp}_4(3)$. Furthermore, upon replacing $\alpha$ by $3\alpha^2$, 
computation of the discriminant of $f$ shows that the two branch points with inertia group generator in $C_1$ become $\mathbb{Q}(\alpha)$-rational, say 
$t\mapsto k$ and $t\mapsto \ell$ (with $k,\ell \in \qq(\alpha)$). The quadratic extension corresponding to the fixed field of $\text{PSp}_4(3)$ is thus 
ramified at exactly two points, both rational. It is therefore a rational function field, given by an equation $cY^2=(t-k)(t-\ell)$ 
(with some constant $c$), and a fractional linear transformation easily yields a parameter $s$ for such a function field, providing the equation 
$t=t(s)$ as above. 
\end{proof}

\subsection{Another two-parameter family with group \texorpdfstring{\boldmath{$\text{PSp}_6(2)$}}{PSp(6,2)} of degree 36} 
Let $(C_1,C_2,C_2,C_3)$ be the class vector of the group $\text{PSp}_6(2)$ acting 2-transitively on 36 elements where the conjugacy classes $C_1$, $C_2$ and $C_3$ are unique of type $(3^{12})$, $(1^{12}.2^{12})$, and $(1^6.2.4^7)$. In the same fashion as before one can show theoretically that $\text{PSp}_6(2)$ occurs as a Galois group over $\mathbb{Q}(\alpha,t)$ where the ramification type is given by the above class vector. Surprisingly, the explicit two-parameter family turns out have rather small coefficients:

\begin{satz} 
Let $f(\alpha,t,X) = p(\alpha,X)-tq(\alpha,X) \in \mathbb{Q}(\alpha,t)[X]$
where 
\begin{align*}
p(\alpha,X) =\; 
& \left( X^{12} + X^{11} + \left(144\alpha + \frac{1}{8} \right)X^{10} + 40\alpha X^9 + \left(-1728 \alpha^2 + \frac{21}{4}\alpha\right)X^8 \right. \\
&+ \left(-576\alpha^2 + \frac{3}{8}\alpha\right)X^7 - 84\alpha^2X^6 - 6\alpha^2X^5 + \left(144\alpha^3 - \frac{3}{64}\alpha^2\right)X^4  \\
& \left. + \;40\alpha^3X^3 + \frac{13}{4}\alpha^3X^2 + \frac{1}{8}\alpha^3X + \alpha^4 \right)^3 ,
\end{align*}
and
\begin{align*}
q(\alpha,X) =\; & \left( X^6 - 12\alpha X^4 + \frac{1}{2}\alpha ^2 \right) \cdot \left(X^3 - 24\alpha X - 2\alpha \right)^4\\
& \cdot \left(X^4 + \frac{1}{6}X^3 + \frac{1}{24}\alpha \right)^4.
\end{align*}
Then the Galois group of $f$ over $\mathbb{Q}(\alpha,t)$ is isomorphic to $\text{\emph{PSp}}_6(2)\leq S_{36}$, and the branch cycle structure of $f$ with respect to $t$ is given by $(3^{12}$, $1^{12}.2^{12}$, $1^{12}.2^{12}$, $1^6.2.4^7)$.
\end{satz}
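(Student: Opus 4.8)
The plan is to mirror the proof of Theorem~\ref{totally_real}. Writing $f = p - tq$ with $p,q\in\qq(\alpha)[X]$ of degrees $36$ and $34$, I would first read off the branch cycle structure with respect to $t$ and then pin down $\text{Gal}(f\mid\qq(\alpha,t))$ by specializing $\alpha$, reducing modulo a suitable prime, and invoking the classification of $2$-transitive groups of degree $36$.

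For the ramification, I would regard $f=0$ as the degree-$36$ rational map $t=p(X)/q(X)$ and compute its branch behaviour over $t\in\{0,\infty\}$ directly from the given factorizations, confirming each cycle type through the inseparability behaviour of the relevant reductions, exactly as in Theorem~\ref{totally_real}. Over $t=0$ the fibre is the zero set of $p=g^3$ with $\deg g=12$, each zero of multiplicity $3$, giving $3^{12}$. Over $t=\infty$ the poles arise from the three factors of $q$ of degrees $6,3,4$ occurring with multiplicities $1,4,4$ (hence ramification indices $1,4,4$) together with a pole of order $\deg p-\deg q=2$ at $X=\infty$, giving $1^6\cdot 4^3\cdot 4^4\cdot 2=1^6.2.4^7$. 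The two remaining branch points are the nonzero roots of the discriminant $\Delta(\alpha,t)\in\qq(\alpha)[t]$; computing $\Delta$ and inspecting the inseparability at these places shows each contributes $1^{12}.2^{12}$. This yields the asserted type $(3^{12},\,1^{12}.2^{12},\,1^{12}.2^{12},\,1^6.2.4^7)$, i.e.\ the classes $(C_1,C_2,C_2,C_3)$.

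For the Galois group I would follow Theorem~\ref{totally_real}: by Malle's lemma (\cite[Lemma 3.1]{Malle}) it suffices to determine $\text{Gal}(f_{\alpha_0}\mid\qq(t))$ for a suitable rational $\alpha_0$, which I examine after reducing modulo a well-chosen prime $p$ and applying Dedekind reduction. The decisive structural fact is that the only $2$-transitive groups of degree $36$ are $\text{PSp}_6(2)$, $A_{36}$ and $S_{36}$ (there is no affine example, since $36$ is not a prime power). Hence I must (i) establish $2$-transitivity by producing an irreducible resolvent, e.g.\ by verifying that $\frac{1}{X-t}\bar f_{\alpha_0}(\bar p/\bar q,X)$ is irreducible as in Theorem~\ref{totally_real}; (ii) exclude $S_{36}$; and (iii) exclude $A_{36}$. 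Part (ii) is immediate, since the inertia generators $3^{12}$, $1^{12}.2^{12}$ and $1^6.2.4^7$ are all even permutations, so the geometric --- and, by regularity, the arithmetic --- Galois group lies in $A_{36}$, equivalently the discriminant is a square. For part (iii) I would exploit the second $2$-transitive action of $\text{PSp}_6(2)$, of degree $28$: constructing the corresponding degree-$28$ resolvent of $\bar f_{\alpha_0}$ (via the \texttt{GaloisSubgroup} command in \texttt{Magma}, as before) and certifying that it forces a proper subgroup of index at most $28$ excludes $A_{36}$, all of whose proper subgroups have index at least $36$. This leaves $\text{PSp}_6(2)$. Finally, both surviving candidates being simple, the extension is regular, so Beckmann's theorem (\cite[Chapter I, Proposition 10.9]{MM}) carries the identification from characteristic $p$ to characteristic $0$, and Malle's lemma returns it to $\qq(\alpha,t)$.

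The branch cycle computation and the parity bookkeeping are routine. The genuine obstacle is part (iii): one must carry out the resolvent computation certifying a proper subgroup of index below $36$, while simultaneously making compatible choices of specialization $\alpha_0$ and prime $p$ that preserve both the number of branch points and the cycle structure, so that Malle's lemma and Beckmann's theorem genuinely apply. Once these computations go through, the remaining deductions are purely formal.
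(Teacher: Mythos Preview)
Your proposal is correct and matches the paper's approach exactly: the paper's proof here is the single sentence ``One can mimic the proof of Theorem~\ref{totally_real} with some minor changes,'' and your outline is precisely such a mimicry. Your choice of the degree-$28$ resolvent (coming from the other $2$-transitive action of $\text{PSp}_6(2)$) to exclude $A_{36}$ is a natural and arguably cleaner variant of the degree-$63$ resolvent used in Theorem~\ref{totally_real}, since $28<36$ directly contradicts $A_{36}$ having any proper subgroup of index below $36$.

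One small logical point to tighten: in step~(ii) you deduce that the \emph{arithmetic} Galois group lies in $A_{36}$ from the parity of the inertia generators ``by regularity,'' but regularity is only established later (via simplicity), so as written this is circular. Over $\qq(\alpha,t)$ there exist unramified quadratic extensions, so even inertia alone does not force the full Galois group into $A_{36}$. The fix is exactly what the paper does in Theorem~\ref{totally_real}: compute the discriminant of $f_{\alpha_0}$ and of its mod-$p$ reduction and verify directly that both are squares. Your parity observation then serves as a welcome consistency check rather than the argument itself.
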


\begin{proof}
One can mimic the proof of Theorem~\ref{totally_real} with some minor changes.
\end{proof}

The degree-$36$ polynomial appearing in the theorem as well as the corresponding degree-$72$ Belyi map are also listed in the ancillary file, see section E.

\subsection{A 5-branch point cover for \texorpdfstring{\boldmath{$\text{PSp}_6(2)$}}{PSp(6,2)}}
The advantage of our approach compared to previous ones increases as the number of branch points grows.
We give just one example of a complex approximation for a $5$-branch point cover with Galois group $\text{PSp}_6(2)$. Using the techniques of the previous sections, one could again use this to obtain an equation for a family of covers. This time, we take a genus-$0$ tuple of type $(2^{10}.1^{16}$, $2^{12}.1^{12}$, $2^{12}.1^{12}$, $2^{12}.1^{12}$, $3^{12})$ in the 2-transitive degree-$36$ permutation action of $\text{PSp}_6(2)$. Using Proposition \ref{reduction_to_3pts}, we turn this into a Belyi function of degree $108$, with imprimitive Galois group contained in $\text{PSp}_6(2)\wr C_3$, by composing with the rational function $x\mapsto x^3$, see Figure \ref{108}. The third root of this Belyi map then gives the desired 5-branch point $\text{PSp}_6(2)$-cover.  We have included it in the file, see section F.

The monodromy of the computed complex cover can be checked numerically with the path lifting algorithm in \cite[Chapter 11.1]{Koe}.

\begin{figure}
\begin{center}
\includegraphics[clip, trim=0.5cm 7cm 0.5cm 6cm, width=.7\textwidth]{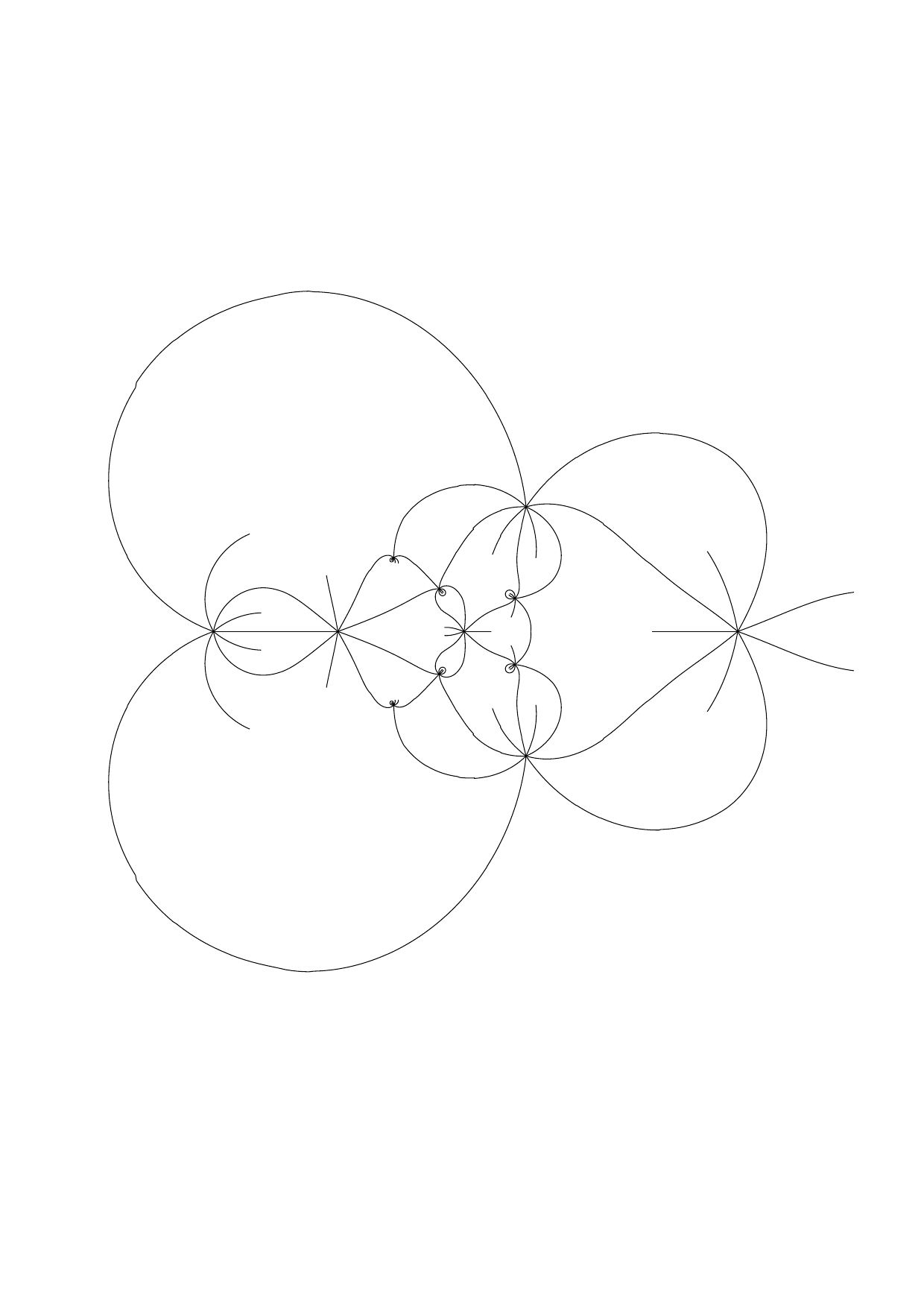}
\caption{
approximate dessin of degree 108 in $\mathbb{P}^1$
}
\label{108}
\end{center}
\end{figure}

\section*{Acknowledgements}
We are indebted to Peter M\"uller for several valuable suggestions. Thanks also to the anonymous referees for helpful feedback.


\begin{thebibliography}{10}

\bibitem{Barnes2014}
J.~Barnes.
\newblock {\em Conformal welding of uniform random trees}.
\newblock PhD thesis, 2014.

\bibitem{Barth2016}
D.~Barth and A.~Wenz.
\newblock {E}xplicit {P}olynomials {H}aving the {H}igman-{S}ims {G}roup as
  {G}alois {G}roup over {Q}(t).
\newblock 2016, arXiv:1611.04314v2.

\bibitem{BW_J1}
D.~Barth and A.~Wenz.
\newblock Belyi map for the sporadic group $\text{J}_1$.
\newblock 2017, arXiv:1704.06419v1.

\bibitem{BW_J2}
D.~Barth and A.~Wenz.
\newblock Belyi map for the sporadic group $\text{J}_2$.
\newblock 2017, arXiv:1712.05268v1.

\bibitem{Barth2017}
D.~Barth and A.~Wenz.
\newblock {E}xplicit {B}elyi maps over {Q} having almost simple primitive
  monodromy groups.
\newblock 2017, arXiv:1703.02848v1.

\bibitem{Magma}
W.~Bosma, J.~Cannon, and C.~Playoust.
\newblock The {M}agma algebra system. {I}. {T}he user language.
\newblock {\em J. Symbolic Comput.}, 24(3-4):235--265, 1997.
\newblock Computational algebra and number theory (London, 1993).

\bibitem{Couveignes}
J.-M. Couveignes.
\newblock Tools for the computation of families of coverings.
\newblock In {\em Aspects of {G}alois theory ({G}ainesville, {FL}, 1996)},
  volume 256 of {\em London Math. Soc. Lecture Note Ser.}, pages 38--65.
  Cambridge Univ. Press, Cambridge, 1999.

\bibitem{Cox}
D.\ A.\ Cox.
\newblock Galois Theory (Second Edition).
\newblock Wiley, 2012.


\bibitem{Driscoll1996}
T.~A. Driscoll.
\newblock Algorithm 756: {A} {MATLAB} toolbox for {S}chwarz-{C}hristoffel
  {M}apping.
\newblock {\em ACM Trans. Math. Softw.}, 22(2):168--186, June 1996.

\bibitem{DF2}
P.~D\`ebes and M.~D. Fried.
\newblock Nonrigid constructions in {G}alois theory.
\newblock {\em Pacific J. Math.}, 163(1):81--122, 1994.

\bibitem{FV}
M.~D. Fried and H.~V\"olklein.
\newblock The inverse {G}alois problem and rational points on moduli spaces.
\newblock {\em Math. Ann.}, 290(4):771--800, 1991.

\bibitem{Hal}
E.~Hallouin.
\newblock Study and computation of a Hurwitz space and totally real $PSL_2(F_8)$-extensions of $\mathbb{Q}$.
\newblock {\em J. Algebra}, 292(1):259--281, 2005.

\bibitem{Klug2014}
M.~Klug, M.~Musty, S.~Schiavone, and J.~Voight.
\newblock Numerical calculation of three-point branched covers of the
  projective line.
\newblock {\em LMS Journal of Computation and Mathematics}, 17(1):379--430, 001
  2014.

\bibitem{Koe2017}
J.~K\"onig.
\newblock Computation of {H}urwitz spaces and new explicit polynomials for
  almost simple {G}alois groups.
\newblock {\em Math. Comp.}, 86(305):1473--1498, 2017.

\bibitem{Koe}
J.~K\"onig.
\newblock {\em The inverse Galois problem and explicit computation of families
  of covers of $\mathbb{P}^1\mathbb{C}$ with prescribed ramification}.
\newblock PhD thesis, W\"urzburg, 2014.

\bibitem{Malle}
G.~Malle.
\newblock Multi-parameter polynomials with given {G}alois group.
\newblock {\em J. Symbolic Comput.}, 30(6):717--731, 2000.
\newblock Algorithmic methods in Galois theory.

\bibitem{MM}
G.~{Malle} and B.~{Matzat}.
\newblock {\em {Inverse Galois theory. 2nd edition.}}
\newblock Berlin: Springer, 2018.

\bibitem{Marshall2007}
D.~E. Marshall and S.~Rohde.
\newblock Convergence of a variant of the zipper algorithm for conformal
  mapping.
\newblock {\em SIAM J. Numer. Anal.}, 45(6):2577--2609, 2007.

\bibitem{MATLAB}
MATLAB.
\newblock {\em version 9.4.0 (R2018a)}.
\newblock The MathWorks Inc., Natick, Massachusetts, 2018.

\bibitem{Monien2014}
H.~Monien.
\newblock How to calculate rational coverings for subgroups of {PSL(2,Z)}
  efficiently.
\newblock In Embedded Graphs 2014, 2014.

\bibitem{Monien2017}
H.~Monien.
\newblock The sporadic group $\text{J}_2$, {H}auptmodul and {B}elyi map.
\newblock 2017, arXiv:1703.05200v1.

\bibitem{Monien2018}
H.~Monien.
\newblock The sporadic group $\text{Co}_3$, {H}auptmodul and {B}elyi map, 2018,
  arXiv:1802.06923v1.

\bibitem{Roberts2016}
D.~P. Roberts.
\newblock Hurwitz-{B}elyi {M}aps.
\newblock 2016, arXiv:1608.08302v1.

\bibitem{RW}
M.~Romagny and S.~Wewers.
\newblock Hurwitz spaces.
\newblock In {\em Groupes de {G}alois arithm\'etiques et diff\'erentiels},
  volume~13 of {\em S\'emin. Congr.}, pages 313--341. Soc. Math. France, Paris,
  2006.

\bibitem{Sijsling2014}
J.~{Sijsling} and J.~{Voight}.
\newblock {On computing Belyi maps.}
\newblock In {\em {Num\'ero consacr\'e au trimestre ``M\'ethodes
  arithm\'etiques et applications'', automne 2013}}, pages 73--131.
  Besan\c{c}on: Presses Universitaires de Franche-Comt\'e, 2014.

\bibitem{Vidunas2016}
R.~Vidunas and Y.-H. He.
\newblock Composite {G}enus {O}ne {B}elyi {M}aps.
\newblock 2016, arXiv:1610.08075v2.

\bibitem{Voe}
H.~V\"olklein.
\newblock {\em Groups as {G}alois groups: An introduction}, volume~53 of {\em Cambridge Studies
  in Advanced Mathematics}.
\newblock Cambridge University Press, Cambridge, 1996.

\end{thebibliography}
\end{document}